\documentclass[12pt]{amsart}
\usepackage{hyperref}
\usepackage{amsmath, amsthm, amstext}
\usepackage{amssymb, array, amsfonts}
\usepackage[english]{babel}
\usepackage{bbding}
\usepackage{float}
\usepackage{graphics}
\usepackage{graphicx}
   \usepackage{enumerate}
\numberwithin{equation}{section}

\setlength{\topmargin}{-1cm} \setlength{\textheight}{22cm}
\setlength{\oddsidemargin}{-0.5cm}
\setlength{\evensidemargin}{-0.5cm} \setlength{\textwidth}{17cm}

\def \er{\varepsilon}

\def \ka{\varkappa}

\def \mut{\tilde{\mu}}

\renewcommand{\l}{\left}
\renewcommand{\r}{\right}
\renewcommand{\mod}{\,\mathrm{mod}\,}

\def \Q{\mathbb{Q}}

\def \N{\mathbb{N}}
\def \M2{\mathrm{M}_2}
\def \R{\mathbb{R}}
\def \Z{\mathbb{Z}}
\def \T{\mathbb{T}}

\def \sl2r{\mathrm{SL}(2,\R)}

\newcommand{\beq}{\begin{equation}}
\newcommand{\eeq}{\end{equation}}

\def\wt{\widetilde}

\def\diag{\operatorname{diag}}

\def\tr{\operatorname{tr}}

\def\wt{\widetilde}

\newcommand{\eqdef}{\stackrel{\rm def}{=\kern-3.6pt=}}

\theoremstyle{plain}
\newtheorem{theorem}{\bf Theorem}[section]
\newtheorem{lemma}[theorem]{\bf Lemma}

\newtheorem{prop}[theorem]{\bf Proposition}
\newtheorem{cor}[theorem]{\bf Corollary}

\theoremstyle{definition}

\theoremstyle{remark}
\newtheorem{remark}[theorem]{\bf Remark}

\theoremstyle{cond}

\renewcommand{\le}{\leqslant}
\renewcommand{\ge}{\geqslant}
\newcommand{\rank}{\mathop{\mathrm{rank}}\nolimits}

\renewcommand{\qed}{\vrule height7pt width5pt depth0pt}
\title[Lipschitz monotone potentials]{All couplings localization for quasiperiodic operators with Lipschitz monotone potentials}
\author{Svetlana Jitomirskaya, Ilya Kachkovskiy}
\date{}
\begin{document}

\begin{abstract}
	We establish Anderson localization for quasiperiodic operator families of the 
	form
	$$
	(H(x)\psi)(m)=\psi(m+1)+\psi(m-1)+\lambda v(x+m\alpha)\psi(m)
	$$
	for all $\lambda>0$ and all Diophantine $\alpha$, provided that $v$ 
	is a $1$-periodic function 
        satisfying a 		Lipschitz
monotonicity condition on $[0,1)$. The localization is uniform on any 
	energy interval on which Lyapunov exponent is bounded from below.
	
\end{abstract}
\maketitle
\section{Introduction}
Ever since the Nobel-prize winning discovery of Anderson that even
weakly coupled 1D structures with random impurities exhibit insulator
behavior (manifested mathematically as Anderson localization) a
paradigm has been that the phenomenon of localization at small
couplings is a signature of ``randomness'',
(see e.g. \cite{damrev}). Localization at large couplings is
intuitive and can be approached perturbatively in a variety of settings. However, small-coupling
localization, i. e. the fact that there is pure point spectrum even for
{\it arbitrary small} perturbations of the Laplacian, is not expected at all in dimensions higher than two, and
even in random 1D  is significantly more subtle than the corresponding high
coupling fact.  Indeed this phenomenon is not
present for analytic quasiperiodic potentials which have purely
absolutely continuous spectrum at small couplings for all phases and
frequencies \cite{eli, bj,aj, avilaarc}, however is expected (yet
apparently difficult to establish) even for the mildly random underlying dynamics
such as skew shifts \cite{fg,kru}. So far however  it was only proved in the random
or quasirandom cases \cite{ckm,bs}. 

In this paper we show that localization at all couplings, that is, 
{\it ``random-like''} behavior,  holds for
all {\it quasiperiodic potentials} that are monotone (in a Lipshitz way) on the period. Such
monotonicity of course  implies discontinuity. It was shown in \cite{DK} (proving
a conjecture made in \cite{mz}) that this already leads to
a.e. positivity of the Lyapunov exponents as discontinuity makes the potentials
non-deterministic in the Kotani sense.  Yet the question of localization
is still very subtle.  First it is not a priori clear if the Lyapunov exponents
are positive on the spectrum (or equivalently whether the spectrum has 
positive measure). Indeed, in the most well studied quasiperiodic
model with discontinuity, the Fibonacci potential (see, for example,
the recent preprint \cite{DGY} for the most comprehensive results and
references) the spectrum  is a Cantor set of zero Lebesgue measure,
and is purely singular continuous for all $\lambda>0$.  The same
holds, more generally, for Sturmian potentials, for all of which
Lyapunov exponents are positive almost everywhere (by the same
discontinuity reason) but not on the zero measure spectrum. Second, even under the condition of  positivity
of the Lyapunov exponents, establishing Anderson localization is a
known difficult problem \cite{bbook}. While monotonicity results in
lack of resonances and thus should lead to
obvious advantages in the perturbative arguments, all the existing
non-perturbative ones \footnote{We employ the word non-perturbative in the widely used by now
  sense of ``obtained as a corollary of positive Lyapunov exponents
  without further largeness/smallness assumptions''
  \cite{bbook,congr}.} have so far required (near) analyticity and used a powerful
analytic apparatus, thus are not applicable here. Indeed the problem
remains extremely difficult even for the  smooth
potentials (e.g. \cite{Sin,eli1,K}) and not much is known beyond
(near) analyticity. In contrast, our results
only require Lipshitz monotonicity and even that can be somewhat relaxed.


More precisely we consider quasiperiodic operator families in $l^2(\Z)$ of the form
\beq
\label{h_def}
(H_{\alpha,\lambda}(x) \psi)(m)=\psi(m+1)+\psi(m-1)+\lambda v(x+m\alpha) \psi(m),\quad m\in \Z,
\eeq
where $v$ is a $1$-periodic function on $\mathbb R$, continuous on $[0,1),$ 
satisfying $v(0)=0$, $v(1-0)=1$, and having the following Lipschitz monotonicity property:
\beq
\label{antilip}
\gamma_-(y-x)\le v(y)-v(x)\le \gamma_+(y-x),\quad \gamma_+,\gamma_->0.
\eeq
Hence, $v$ is strictly increasing on $[0,1)$ and has jump discontinuities 
at integer points. A typical example of such function would be $v(x)=\{x\}$, for which 
we have $\gamma_+=\gamma_-=1$. We show (see Corollary
\ref{localization_cor}) that for  for all $\lambda$  and almost all
$\alpha ,$ for any $v$ satisfying \eqref{antilip} and a.e. $x,$ the spectrum of the operator \eqref{h_def} is purely point and the eigenfunctions decay exponentially at the 
Lyapunov rate.

Additionally, we show that for any operator (\ref{h_def}) satisfying
(\ref{antilip}) the integrated density of states is absolutely
continuous and the Lyapunov exponent is (almost Lipshitz)
continuous. So far, results of this nature have been proved only for
either random (where Wegner's lemma is available) or analytic
quasiperiodic potentials with Diophantine frequencies. In contrast, our
result does not require {\it any} condition on frequency.

Finally, we show that Anderson localization for any operator (\ref{h_def}) satisfying
(\ref{antilip}) is {\it uniform}  on any interval on which Lyapunov
exponent is uniformly positive (such positivity is established in Corollary \ref{lyapunov_cor} for large $\lambda$, and the bound depends only on $v$ for almost every $\alpha$). Uniform
localization, while often considered a feature of localization in
physics literature was shown in \cite{djls,jfa} to not hold for random
or analytic quasiperiodic models as it is incompatible with generic
singular continuous spectrum that occurs in many ergodic  families. In
fact, so far the only known example is in the context of
limit-periodic operators, see \cite{DG}. Operators \eqref{h_def} with
$v$ satisfying \eqref{antilip} provide therefore the first explicit
example of a uniformly localized family.\footnote{Maryland model
  exhibits uniform localization for energies restricted to a finite
  interval, but not overall.} We note that our method covers a wide class of potentials compared to very concrete Fibonacci or Maryland models.


The proof is based on studying  the restrictions of the operator \eqref{h_def} onto intervals of the form $[0,q_k-1]$, where $q_k$ are denominators of the continued fraction approximation of $\alpha,$ with certain properties. It 
is possible to show that the spectra of these restrictions are almost invariant under the 
transformation $x\mapsto x+\alpha$. Due to local monotonicity of the 
eigenvalues as functions of $x$ and using some finite rank perturbation arguments, this leads to linear repulsion of eigenvalues and to a 
Lipschitz bound on the integrated density of states.\footnote{ Even
  though the technical analysis only holds for a sparse sequence of
  scales, this is sufficient for a conclusion on the IDS. This is what
  allows to obtain the result without any Diophantine conditions.} As a consequence, the Lyapunov exponent
is continuous and, for large $\lambda$, is uniformly bounded from below. 
A more careful study of the spectra of $H_{q_k}(x)$ leads to a large deviation theorem. 
The rest of the proof follows the non-perturbative scheme of 
proving Anderson localization in \cite{Lana}, with large deviation
estimate replacing the analytic part of the argument.  Uniformity of localization 
follows from uniformity in the large deviation theorem.

It is interesting how our results compare with the recent work \cite{AK} on 
so-called {\it monotonic cocycles}, i. e. cocycles $A\colon \T^d\to
\sl2r$ for which one can find $w\in \R^d$ such that the map $t\mapsto
\arg\{A(x+tw)\cdot y\}$ has positive derivative in $t$ for all $x\in
\T^d$, $y\in \R^2\setminus\{0\}$. It was discovered that the Lyapunov
exponents of such cocycles behave in a highly regular way; for
example, they are analytic (resp. $C^{\infty}$) in any parameter as
long as the cocycle analytically (resp. $C^{\infty}$) depends on the
parameter. Nothing like that is true for general analytic cocycles:
one can establish continuity in the analytic case \cite{B,BJ2},
but it can be seen that any prescribed continuity modulus is escaped
by a generic set of frequencies and also there is no H\"older continuity
for the critical almost Mathieu operator at some Diophantine
frequencies \cite{hs}. Moreover,  in the $C^{\infty}$ case even the continuity may fail \cite{WY}. The price to pay for regularity is that continuous monotonic cocycles are never homotopic to the identity, and hence there are no examples of monotonic Schr\"odinger cocycles. 
However, for the Schr\"odinger cocycle $S_{v,E}(x)=\begin{pmatrix}
E-v(x)&-1\\1&0
\end{pmatrix}$
one can establish the following identity:
$$
\l\langle
\frac{d}{dx}\l\{S_{v,E}(x+\alpha)S_{v,E(x)}
\begin{pmatrix}u_1\\u_2\end{pmatrix}\r\},
\begin{pmatrix}
0&-1\\
1&0
\end{pmatrix}S_{v,E}(x+\alpha)S_{v,E(x)}
\begin{pmatrix}u_1\\u_2\end{pmatrix}
\r\rangle=
$$
$$
=v'(x)u_1^2+v'(x+\alpha)\l((v(x)-E)u_1+u_2\r)^2>0
$$
whenever $u_1^2+u_2^2>0$. 
This implies that the second iterate of the Schr\"odinger cocycle with
with $v$ satisfying \eqref{antilip} is {\it locally monotonic in $x$}
at the points where $v'(x)$, $v'(x+\alpha)$ exist\footnote{In
  \cite{AK}, monotonicity with respect to a parameter was also
  studied. In particular, it is mentioned that the second iterate of $S_{v,E}$ is monotonic in $E$, rather than the first iterate.}.
In other words, we avoid topological obstruction to monotonicity by
introducing a discontinuous potential. Thus, it may be natural to
expect that some of the properties of monotonic cocycles (such as
regularity of the Lyapunov exponent) survive to some extent in our
case, and the advantage is that we remain in the framework of
Schr\"odinger cocycles. We see that, while our method is completely
different from that of \cite{AK}, regarding continuity (and even
some H\"older continuity), this is indeed the case. Moreover, it
suggests that some general results of \cite{AK} may hold true even  if
discontinuity is allowed.
\section{Preliminaries: density of states and Lyapunov exponent}
Let $\wt H_n(x)$ denote the restriction of
$H_{\alpha,\lambda}(x)$ to $l^2[0,n-1]$ {\it with periodic boundary conditions}. We will 
denote integer intervals by $[0,n-1]$ instead of $[0,n-1]\cap \Z$ where it is clear from the 
context. We have, therefore,
$$
(\wt H_n(x)\psi)(m)=\psi((m+1)\mod\, n)+\psi((m-1)\mod\,n)+\lambda v(x+m\alpha),\quad m\in \{0,\ldots,n-1\}.
$$
The density states measure $N(dE)$ can be defined as the following functional on continuous functions with compact support:
$$
\int_{\R} f(E) N(dE)=\lim_{n\to \infty}\frac{1}{n}\int_{[0,1)}\tr f(\widetilde{H}_{n}(x))\,dx.
$$
The measure $N(dE)$ is a continuous probability measure, and its distribution function is called the {\it integrated density of states} (IDS) and is defined by
$$
N(E):={N}((-\infty,E))={N}((-\infty,E]).
$$
Approximating the characteristic function of $(-\infty,E]$ from above and from below 
by continuous functions, one can easily see that
\beq
\label{ids_def}
N(E)=\lim_{n\to\infty}\frac{1}{n}\int_{[0,1)} \widetilde N_{n}(x,E)\,dx,
\eeq
where
\beq
\label{counting_def}
\widetilde N_{n}(x,E)=\#\sigma(\widetilde H_{n}(x))\cap(-\infty,E]
\eeq
is the counting function of the periodic restriction.

Let $H_n(x)$ be the {\it Dirichlet} restriction 
of $H_{\alpha,\lambda}(x)$ onto $[0,n-1]$, and let $P_n(x,E)=\det(H_n(x)-E)$. The $n$-step transfer matrix is defined by
$$
M_n(x,E):=\prod_{l=(n-1)}^0\begin{pmatrix}E-\lambda v(x+l\alpha)&-1\\1&0 \end{pmatrix}=\begin{pmatrix}P_n(x,E)&-P_{n-1}(x+\alpha,E)\\ P_{n-1}(x,E)&-P_{n-2}(x+\alpha,E)\end{pmatrix},
$$
and the standard definition of the Lyapunov exponent is given by
\beq
\label{gammadef}
\gamma(E)=\lim_{n\to\infty}\frac{1}{n}\int_{[0,1)}\ln\|M_n(x,E)\|\,dx=
\inf_{n\in \N}\frac{1}{n}\int_{[0,1)}\ln\|M_n(x,E)\|\,dx.
\eeq
Thouless formula relates the Lyapunov exponent and the density of states measure:
\beq
\label{thouless}
\gamma(E)=\int_{\R} \ln|E-E'|\,N(dE').
\eeq
The expression \eqref{ids_def} also holds for $N_n$ instead of $\wt N_n$ because $H_{\alpha,\lambda}(x)$ is a rank 2 perturbation of $\wt H_{\alpha,\lambda}(x)$.
\section{Main results}
An irrational frequency $\alpha$ is called {\it Diophantine} if there exist $C,\tau>0$ such that for all $n\in \N$ we have $\|n\alpha\|\ge C|n|^{-\tau}$, where $\|x\|=\min(\{x\},\{1-x\})$. Let 
$$
\er(\alpha)=\liminf\limits_k \frac{q_{k-1}}{q_{k+1}},
$$
where $q_k$ are 
denominators of the continued fraction approximants of $\alpha$. Note that $\er(\alpha)\le \frac12$ for any $\alpha\in\R\setminus \Q$.
\begin{theorem}
\label{density_th}The integrated density of states $N(E)$ of the operator family $H_{\alpha,\lambda}$ is Lipschitz continuous and satisfies
\beq
\label{lipcont}
|N(E')-N(E)|\le \frac{|E'-E|}{\lambda (1-\er(\alpha))\gamma_-}.
\eeq
As a consequence, density of states measure is absolutely continuous, and the 
spectrum of the operator $H_{\alpha,\lambda}(x)$ has positive Lebesgue measure.
\end{theorem}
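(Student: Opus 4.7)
The plan is to analyze the eigenvalues $E_1(x)\le\cdots\le E_n(x)$ of $\wt H_n(x)$ as functions of $x\in[0,1)$, combining local monotonicity in continuity intervals with a rank-one perturbation analysis at the $n$ discontinuities per period. First, on any continuity interval the sorted eigenvalues are Lipschitz in $x$ and the Hellmann--Feynman formula gives
$E_j'(x)=\lambda\sum_{l=0}^{n-1}v'(x+l\alpha)|\psi_j(l)|^2\ge\lambda\gamma_-$
almost everywhere, using $v'\ge\gamma_-$ a.e.\ on $[0,1)$ and $\sum_l|\psi_j(l)|^2=1$. Second, at each of the $n$ jump points $x_l:=-l\alpha\mod 1$ the potential at site $l$ drops by $1$, so $\wt H_n(x_l^+)-\wt H_n(x_l^-)=-\lambda e_le_l^*$ is a rank-one negative perturbation of norm $\lambda$, and Cauchy interlacing gives $E_{j-1}(x_l^-)\le E_j(x_l^+)\le E_j(x_l^-)$. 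In particular, at most one eigenvalue can cross any fixed level downward at a single jump.

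I would then convert these two facts into a Lipschitz bound on $N$ as follows. Setting $F_j(E):=|\{x\in[0,1):E_j(x)\le E\}|$ so that $\int_0^1\wt N_n(x,E)\,dx=\sum_jF_j(E)$, each $F_j$ is absolutely continuous with density $F_j'(E)=\sum_{x^*:E_j(x^*)=E}1/E_j'(x^*)\le U_j(E)/(\lambda\gamma_-)$, where $U_j(E)$ counts the upward crossings of level $E$ by $E_j$ in continuity intervals. Since $E_j$ is $1$-periodic in $x$ with only downward jumps, the indicator $\one_{E_j>E}$ is a $1$-periodic $\{0,1\}$-valued step function, so upward and downward crossings of any regular level balance; combined with the ``at most one crossing per jump'' observation, summing over $j$ yields $U(E):=\sum_jU_j(E)\le n$, hence $\sum_jF_j'(E)\le n/(\lambda\gamma_-)$ and therefore $N_n'(E)\le 1/(\lambda\gamma_-)$ a.e.\ uniformly in $n$. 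Passing to the limit along $n=q_k$ via \eqref{ids_def} yields Lipschitz continuity of $N$. To recover the exact constant $1/(\lambda(1-\er(\alpha))\gamma_-)$ in \eqref{lipcont}, I would further specialize to $n=q_k$ and exploit the near-symmetry $V\wt H_{q_k}(x+\alpha)V^*=\wt H_{q_k}(x)+\lambda(v(x+q_k\alpha)-v(x))e_0e_0^*$ (with $V$ the cyclic shift) together with the three-distance structure of $\{l\alpha\mod 1\}_{l=0}^{q_k-1}$ to extract the factor $(1-q_{k-1}/q_{k+1})$, then pass to a subsequence realizing $\er(\alpha)=\liminf_k q_{k-1}/q_{k+1}$.

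Absolute continuity of $N(dE)$ is immediate from Lipschitz continuity of $N$, and the positive Lebesgue measure of $\sigma(H_{\alpha,\lambda})$ follows because $N$ continuously increases from $0$ to $1$ on the spectrum with the stated Lipschitz constant, forcing $|\sigma(H_{\alpha,\lambda})|\ge\lambda(1-\er(\alpha))\gamma_-$. The main obstacle is the $q_k$-specific refinement producing the exact $(1-\er(\alpha))$ factor: the clean conservation-of-crossings argument above already yields the (strictly stronger) Lipschitz constant $1/(\lambda\gamma_-)$, but matching the specific form \eqref{lipcont} requires careful bookkeeping of how the $q_k$ rank-one jump perturbations interact with the near-$\alpha$-invariance of the spectrum of $\wt H_{q_k}(x)$, using the three-distance theorem to control clusters of jump locations.
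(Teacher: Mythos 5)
Your argument is correct, and it takes a genuinely different and, in fact, cleaner route than the paper, yielding a \emph{strictly stronger} Lipschitz constant. The paper works only at the ``good'' scales $n=q_k\in Q(\alpha,\er)$: it uses the three-gap structure of $\{j\alpha\}_{j<q_k}$ (Proposition~\ref{twogap}) together with the near-invariance Lemma~\ref{almostinvariant} to prove an eigenvalue repulsion estimate (Theorem~\ref{kgaps_th}), $|\tilde\mu_{m+K}(\beta_l)-\tilde\mu_m(\beta_l)|\ge\lambda\bigl(K(1-\er)\gamma_-/q_k-3\gamma_+/q_{k+1}\bigr)$, and then converts it to a counting bound (Corollary~\ref{counting_th}). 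The factor $(1-\er(\alpha))$ is exactly the price of lower-bounding the gap lengths $\beta_{l+1}-\beta_l$. Your ``conservation of crossings'' argument sidesteps all of this: it works at \emph{every} scale $n$, needs no continued-fraction input, and gives $N_n'(E)\le 1/(\lambda\gamma_-)$ a.e.\ from just two facts---monotonicity $E_j(y)-E_j(x)\ge\lambda\gamma_-(y-x)$ on continuity intervals (so each upward crossing contributes at most $1/(\lambda\gamma_-)$ to $\sum_jF_j'(E)$), and the rank-one interlacing $E_{j-1}(\beta_l-0)\le E_j(\beta_l)\le E_j(\beta_l-0)$ (so at most one $j$ has $E_j(\beta_l)\le E<E_j(\beta_l-0)$ at each of the $n$ jumps). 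By periodicity the number of upward crossings equals the number of downward jump-crossings, which is $\le n$, and the bound $1/(\lambda\gamma_-)$ follows and passes to the limit along \eqref{ids_def}. Since $1-\er(\alpha)\le 1$, this is at least as good as \eqref{lipcont}---so your closing paragraph about needing ``careful bookkeeping'' to ``recover'' the exact constant $1/(\lambda(1-\er(\alpha))\gamma_-)$ is a non-issue: a stronger Lipschitz bound trivially implies the stated weaker one, and there is nothing left to match. What your shortcut does \emph{not} give you is the pointwise eigenvalue-repulsion estimate \eqref{Kgaps} at the scales $q_k$, which the paper reuses as the engine of the large deviation theorem (Theorem~\ref{ldt}); the paper's heavier setup for Theorem~\ref{density_th} is largely an investment in that later machinery, not a necessity for the IDS bound itself.
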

\begin{cor}
\label{lyapunov_cor}
The Lyapunov exponent of $H_{\alpha,\lambda}$ is continuous in $E$ and the set of zeros of $\gamma(E)$ is a closed subset of zero measure. It also admits a lower bound
\beq
\label{lowerlyap}
\gamma(E)\ge \max\l\{\ln\lambda-\ln \frac{2e}{(1-\er(\alpha))\gamma_-},0\r\}.
\eeq
Hence, $\gamma(E)$ is uniformly positive for large $\lambda$.
\end{cor}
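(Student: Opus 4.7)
The plan is to deduce all three statements of Corollary \ref{lyapunov_cor} from the Thouless formula \eqref{thouless} combined with the Lipschitz bound on $N$ established in Theorem \ref{density_th}. Throughout, I write $L=1/(\lambda(1-\er(\alpha))\ga_-)$, so that $N$ has a density $n(E')\le L$ with compact support (contained in the spectrum, hence in some interval of length at most $4+\lambda$).

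Continuity of $\gamma$ is immediate from the representation $\gamma(E)=\int \ln|E-E'|\,n(E')\,dE'$: this is the logarithmic potential of a bounded, compactly supported density, so a standard uniform integrability / dominated convergence argument (the singularity $\ln|E-E'|$ is integrable and $n$ is uniformly bounded) shows that $\gamma$ is continuous in $E$. For the lower bound, I use that $N$ is a probability measure to rewrite, for any $R>0$,
\[
\gamma(E)=\ln R+\int \ln\frac{|E-E'|}{R}\,dN(E').
\]
The integrand on the right is nonnegative on $\{|E-E'|>R\}$ and nonpositive on $\{|E-E'|\le R\}$; on the latter set I bound it below using $n\le L$:
\[
\int_{|E-E'|\le R} \ln\frac{|E-E'|}{R}\,dN(E')\ge L\int_{-R}^R\ln\frac{|t|}{R}\,dt=-2LR.
\]
Hence $\gamma(E)\ge \ln R-2LR$, and the choice $R=1/(2L)$ optimizes this to $\gamma(E)\ge -\ln(2eL)=\ln\lambda-\ln\frac{2e}{(1-\er(\alpha))\ga_-}$. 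Since transfer matrices lie in $\sl2r$, $\gamma(E)\ge 0$ automatically, giving the bound with the maximum. Uniform positivity for large $\lambda$ is then clear.

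For the statement on zeros, closedness follows at once from continuity. The measure-zero claim is the only place where external input is needed: because $v$ has jump discontinuities, it is non-deterministic in the Kotani sense (as shown in \cite{DK}), so the a.c. spectrum of $H_{\alpha,\lambda}(x)$ is empty. Kotani's theorem then identifies the essential closure of $\{\gamma=0\}$ with the a.c. spectrum, which forces $\{\gamma=0\}$ to have Lebesgue measure zero.

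I expect no serious obstacle: the only nontrivial computation is the one-parameter optimization in $R$, whose optimum $R=1/(2L)$ is precisely calibrated to the stated constant $2e/((1-\er(\alpha))\ga_-)$. The appeal to Kotani theory in the last step is standard and logically independent of the new techniques developed in this paper.
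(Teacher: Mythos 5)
Your proof is correct and follows essentially the same route as the paper: the Thouless formula combined with the Lipschitz bound of Theorem~\ref{density_th} gives both the continuity and the lower bound, and the measure-zero statement for the zero set is outsourced to \cite{DK} via Kotani theory. The paper packages the lower-bound computation into the bathtub-type Lemma~\ref{lyapaux} rather than performing your inline optimization over $R$; note, though, that the displayed conclusion of that lemma carries a sign misprint (it should read $\ge a\int_{-1/(2a)}^{1/(2a)}\ln|x|\,dx=\ln\tfrac{1}{2ea}$, i.e.\ $-\ln(2ea)$, not $\ln(a/2e)$), and your explicit calibration $R=1/(2L)$ correctly reproduces the constant $\ln\lambda-\ln\tfrac{2e}{(1-\er(\alpha))\ga_-}$ stated in \eqref{lowerlyap}.
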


The continuity\footnote{In fact, almost Lipshitz continuity} of $\gamma(E)$ immediately follows from Lipschitz continuity of 
$N(E)$. The fact that the zero set of $\gamma(E)$ has measure zero follows from the 
general result \cite{DK}.

\begin{remark}
Due to \cite[Theorem 29]{Hinchin}, we have $\er(\alpha)=0$ for a full measure set of $\alpha$, with obvious implications for \eqref{lipcont} and \eqref{lowerlyap}. If $v(x)=\{x\}$, then \eqref{lipcont} and \eqref{lowerlyap} hold for {\it all irrational} $\alpha$ with $\er(\alpha)$ replaced by $0$, see Remark \ref{straight}.
\end{remark}

Operator $H$ exhibits {\it uniform  localization} if it has pure point
spectrum and there exist $C,c$ such that for any eigenfunction $\psi$ 
there exists $n_0(\psi)$ so that we have
\begin{equation}\label{psi}
|\psi(n)|\le Ce^{-c|n-n_0|}.
\end{equation}

It is known that for ergodic families $H(x)$ uniform localization implies pure point spectrum for
every $x$ with eigenfunctions satisfying (\ref{psi}) where $C,c$ are uniform in $x$ \cite{djls,jfa,rui}.

Here we introduce a new related notion. We will  say that operator $H$ exhibits {\it uniform  Lyapunov localization} if it has
pure point spectrum with all eigenfunctions decaying exponentially and
at the Lyapunov
rate: for any $\delta>0$, there exists $C(\delta)$ such that for any
eigenfunction $\psi$  satisfying $H \psi=E\psi$, $\|\psi\|_{l^{\infty}}=1$, 
there exists $n_0(\psi)$ so that we have
$$
|\psi(n)|\le C(\delta)e^{-(\gamma(E)-\delta)|n-n_0|}.
$$
The bound is uniform in the sense that there is no dependence on $E$ other than via $\gamma(E)$. Clearly, whenever
the Lyapunov exponent $\gamma$ depends continuously on $E,$  uniform
localization is equivalent to uniform Lyapunov localization plus
nonvanishing of the Lyapunov exponent.\footnote{The direction needed
  for our application  is
  immediate. The other direction  is also  true in
  general for minimal underlying dynamics, because of the uniform upper-semicontinuity.}
Our main theorem is
\begin{theorem}
\label{localization_th}
Let $\alpha$ be Diophantine. Then  for any $\lambda >0,$
$H_{\alpha,\lambda}(x)$ has pure point spectrum, and moreover exhibits
uniform  Lyapunov localization for a.e. $x.$
\end{theorem}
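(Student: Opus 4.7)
The plan is to reduce Theorem \ref{localization_th} to a uniform large deviation estimate (LDT) for the transfer matrices and then feed it into the non-perturbative localization scheme of \cite{Lana}. Corollary \ref{lyapunov_cor} already gives continuity of $\gamma(E)$, and on the set $\{\gamma>0\}$ we need pointwise exponential decay of generalized eigenfunctions at the Lyapunov rate. The key intermediate statement to target is: for every $\delta>0$ there exist $c,C>0$, depending only on $\delta$, $\gamma_-$, $\er(\alpha)$, and $\lambda$, such that along a sufficient sequence of scales (the Diophantine denominators $n=q_k$ suffice),
$$
\bigl|\{x\in[0,1):\ |\tfrac{1}{n}\ln\|M_n(x,E)\|-\gamma(E)|>\delta\}\bigr|\le Ce^{-cn},
$$
uniformly in $E$ on any compact subset of $\{\gamma>0\}$. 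Because the entries of $M_n$ are the polynomials $P_n$ and $P_{n-1}$, this reduces to a sharp lower bound on $|P_n(x,E)|$ valid outside an exponentially small set of $x$.

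First I would establish such a pointwise bound on $|P_n|$ by sharpening the eigenvalue-repulsion analysis that already underlies Theorem \ref{density_th}. Since the Dirichlet eigenvalues $\lambda_j(x)$ of $H_{q_k}(x)$ depend Lipschitz monotonically on $x$, and the periodic spectrum $\spec\widetilde H_{q_k}(x)$ is almost $\alpha$-invariant on scale $n=q_k$, iterating the almost-invariance a polynomial number of times combined with the finite-rank correction yields a linear repulsion bound of the form $|\lambda_j(x)-E|\gtrsim |x-x_j(E)|/n$ outside a small bad set. Multiplying over $j$ and invoking the a.e.\ limit $\tfrac{1}{n}\log|P_n(x,E)|\to\gamma(E)$ furnished by the Thouless formula \eqref{thouless} should then upgrade the integrated bound of Theorem \ref{density_th} to the desired $x$-pointwise LDT; crucially, all constants in this chain are controlled by $\gamma_-$, $\er(\alpha)$, and $\lambda$ alone, so the LDT is automatically uniform in $E$.

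With LDT in hand, the remainder follows the scheme of \cite{Lana}. Fix $x$ in a full-measure set and a polynomially bounded generalized eigenfunction $\psi$ at some $E$ with $\gamma(E)>0$. For an interval $I=[a,b]\not\ni 0$ the Poisson formula gives $\psi(0)=-G_I(0,a)\psi(a-1)-G_I(0,b)\psi(b+1)$, and the Green's function is expressible as a ratio of characteristic polynomials on subintervals over $P_{|I|}(x+a\alpha,E)$. Applying the LDT at scale $|I|=q_k$ to the numerator and denominator shows $|G_I(0,\partial I)|\le e^{-(\gamma(E)-\delta)\dist(0,\partial I)}$ whenever the interval $I$ is ``regular'' at $x$. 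The Diophantine property of $\alpha$ then excludes simultaneous double resonances at well-separated scales and guarantees the existence of a regular interval around the origin arbitrarily far out; summing over a good covering yields $|\psi(0)|\le C(\delta)\,e^{-(\gamma(E)-\delta)|n_0|}$ for any localization center $n_0$ far from $0$, which is exactly the uniform Lyapunov localization bound (applied around $n_0$).

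The main obstacle is the LDT. In the (near-)analytic setting it follows from subharmonicity of $\log\|M_n(z,E)\|$ in a complex strip, but here $v$ is discontinuous and no complex extension is available, so Bourgain's machinery is inapplicable. The LDT has to be proved entirely on the real line, using Lipschitz monotonicity to control the distribution of the eigenvalue gaps of $\widetilde H_{q_k}(x)$ and to translate it into a lower bound on $|P_n(x,E)|$ off an exponentially small set — substantially more delicate than the integrated IDS bound \eqref{lipcont}. A secondary subtlety is retaining uniformity of constants in $E$ throughout, without which one would only obtain almost-everywhere (rather than uniform Lyapunov) localization.
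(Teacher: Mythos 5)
Your overall route coincides with the paper's: use Lipschitz monotonicity of the eigenvalue branches of $\wt H_{q_k}(x)$ together with the almost $\alpha$-invariance of its spectrum (your ``iterating the almost-invariance'') to get linear eigenvalue repulsion (Theorem~\ref{kgaps_th}), turn this into a large-deviation bound for $|P_{q_k}(x,E)|$ (Theorem~\ref{ldt}), and then run the non-perturbative elimination of double resonances from \cite{Lana} (Theorem~\ref{multiscale}), with uniform Lyapunov decay extracted exactly as in Section~\ref{exp_sect}. This is the right skeleton, and you also correctly flag both the discontinuity obstruction to complex-analytic LDT methods and the need to keep constants $E$-uniform.

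However, there is a genuine gap at the step where you pin the almost-constant value of $\frac{1}{q_k}\ln|P_{q_k}(x,E)|$ down to $\gamma(E)$ ``automatically, uniformly in $E$.'' The repulsion plus almost-invariance argument only shows that $\frac{1}{q_k}\ln|P^{\pm}_{q_k}(x,E)|$ is nearly independent of $x$ (varying by $O(\ln q_k/q_k)$), i.e.\ it identifies a scale-dependent quantity $\gamma_{q_k}(E)$; nothing so far says $\gamma_{q_k}(E)\to\gamma(E)$ at a rate uniform in $E$. Your appeal to ``the a.e.\ limit $\frac{1}{n}\log|P_n(x,E)|\to\gamma(E)$ furnished by the Thouless formula'' does not close this: the Thouless formula \eqref{thouless} is an integral identity linking $\gamma$ and the IDS, not a pointwise convergence statement (the a.e.\ convergence is Kingman/Furstenberg--Kesten), and even the pointwise a.e.\ limit carries no rate and no $E$-uniformity. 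In the paper this is precisely the role of Theorem~\ref{upperbound} (from \cite{LanaMavi}): a \emph{uniform upper} bound $|P_n(x,E)|\le e^{n(\gamma(E)+\ka)}$ for all $x$ and all $n\ge N(\ka)$, valid on any compact interval where $\gamma$ is continuous and, importantly, proved for merely piecewise continuous $v$. Combining this upper bound with the infimum definition \eqref{gammadef} and the three-term/periodic identities \eqref{prel1}, \eqref{prel2} is what forces $\gamma_{q_k}(E)=\gamma(E)+o(1)$ uniformly. Without that input, your LDT has an unidentified constant on the right-hand side and the whole \cite{Lana} scheme (and in particular the claim of \emph{uniform} Lyapunov localization) does not get off the ground. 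A smaller inaccuracy worth noting: in the Green's function estimates the numerators $P_{b-l}$, $P_{l-a}$ live at shorter scales than $q_k$, so it is the uniform upper bound (Theorem~\ref{upperbound}), not the LDT, that controls them; only the denominator $P_{q_k}$ is handled by the LDT.
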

An immediate corollary of Theorem \ref{localization_th}, Theorem \ref{7}, and Corollary
\ref{lyapunov_cor} is 
\begin{cor}\label{localization_cor}

Suppose $\alpha$ is Diophantine and $\lambda >
\frac{2e}{(1-\er(\alpha))\gamma_-}.$ Then $H_{\alpha,\lambda}(x)$ has
  uniform localization for all $x.$

\end{cor}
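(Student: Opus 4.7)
The plan is to assemble the three cited ingredients. First, invoke Corollary \ref{lyapunov_cor}: under the hypothesis $\lambda>\frac{2e}{(1-\er(\alpha))\gamma_-}$, the bound \eqref{lowerlyap} gives
$$
\gamma(E)\ge c_0:=\ln\lambda-\ln\frac{2e}{(1-\er(\alpha))\gamma_-}>0
$$
for \emph{every} energy $E\in\R$. In particular, the Lyapunov exponent is uniformly positive on the spectrum, and its continuity (also part of Corollary \ref{lyapunov_cor}) is not even needed at this step.

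Second, apply Theorem \ref{localization_th}: since $\alpha$ is Diophantine, for a.e. $x$ the operator $H_{\alpha,\lambda}(x)$ has pure point spectrum with uniform Lyapunov localization. Choose $\delta=c_0/2$ in the definition of uniform Lyapunov localization; then any eigenfunction $\psi$ with $\|\psi\|_{l^\infty}=1$ satisfies
$$
|\psi(n)|\le C(c_0/2)\,e^{-(\gamma(E)-c_0/2)|n-n_0(\psi)|}\le C(c_0/2)\,e^{-(c_0/2)|n-n_0(\psi)|},
$$
where we used $\gamma(E)\ge c_0$ from the first step. Setting $C:=C(c_0/2)$ and $c:=c_0/2$ gives the bound \eqref{psi} with constants $C,c$ independent of the eigenfunction, the energy, and $x$. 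Hence $H_{\alpha,\lambda}(x)$ exhibits uniform localization for a.e. $x$.

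Finally, to promote the a.e.-$x$ statement to an all-$x$ statement, I would invoke Theorem \ref{7} in conjunction with the standard minimality argument recalled after \eqref{psi}: for an ergodic family with minimal base dynamics, uniform localization holding on a set of positive (hence full) measure of $x$ implies pure point spectrum with bound \eqref{psi}, with the same constants $C,c$, for \emph{every} $x$ (see \cite{djls,jfa,rui}). Since the base dynamics here is the irrational rotation $x\mapsto x+\alpha$ on $\T$, which is minimal, this extension applies directly. The only step requiring genuine content is Theorem \ref{localization_th}; the rest is bookkeeping, which is why the authors call this an immediate corollary. The main conceptual point to verify is that the uniform Lyapunov localization constants $C(\delta)$ produced by Theorem \ref{localization_th} do not depend on $x$ in a way that degenerates on the exceptional null set — but this is built into the definition as stated in the excerpt, so no further work is required.
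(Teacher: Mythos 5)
Your proposal is correct, and the first two steps (extracting the uniform lower bound $\gamma(E)\ge c_0>0$ from Corollary \ref{lyapunov_cor}, then turning Theorem \ref{localization_th}'s ``uniform Lyapunov localization'' into ordinary uniform localization with $c=c_0/2$) are exactly right. The small difference is in step three, where you muddle the roles of the two remaining ingredients. Invoking the ergodicity extension from \cite{djls,jfa,rui} (``uniform localization for a positive-measure set of $x$ implies uniform localization with the same constants for every $x$'') is a valid route from a.e.\ $x$ to all $x$, and for irrational rotation the needed minimality is indeed automatic. But Theorem \ref{7} is not part of that extension mechanism, so the phrase ``invoke Theorem \ref{7} in conjunction with the standard minimality argument'' is off. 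The reason the paper lists Theorem \ref{7} is that it provides a \emph{direct} route to all $x$: the sole reason Theorem \ref{localization_th} is stated for a.e.\ $x$ is the possible exceptional set where the spectral measure charges $\{E:\gamma(E)=0\}$; once $\gamma\ge c_0>0$ everywhere, Theorem \ref{7} applies to every generalized eigenvalue at \emph{every} $x$, giving pure point spectrum for every $x$ outright, and the exponential-decay estimates of Section \ref{exp_sect} are explicitly uniform in $x$ and $E$. So you can get ``all $x$'' either from Theorem \ref{7} applied pointwise (the paper's listed ingredient) or from the \cite{djls,jfa,rui} fact, but you don't need both, and they are logically independent. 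This does not affect correctness — you should just be aware that Theorem \ref{7} and the ergodicity/minimality argument are alternatives, not a conjunction.
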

\begin{remark} It is a very interesting question whether or not
  $\gamma(E)>c(\lambda)>0$ (and therefore whether uniform localization
  holds for all $x$), for {\it all} $\lambda>0.$
\end{remark}
\begin{remark}
Some Diophantine condition in Theorem \ref{localization_th} is
necessary by a Gordon-type argument. We conjecture that the treshold
between pure point and singular continuous spectrum lies at
$\gamma(E)=\beta(\alpha)$ where $\beta(\alpha)=\limsup \frac{\ln
  q_{n+1}}{q_n},$ just like in the almost Mathieu case \cite{ayz,jl}
\end{remark} 
Theorem \ref{density_th} and Corollary \ref{lyapunov_cor} are proved in Section \ref{densityofstates}. Theorem \ref{localization_th} is proved in Sections 
\ref{loc_sect} and \ref{exp_sect}.
\section{Trajectories of irrational rotations}
\label{frequency}
Let $\alpha=[a_0;a_1,\ldots]$, and $\frac{p_k}{q_k}=[a_0;\ldots,a_k]$; note that $q_0=1$. 
We have (see, for example, \cite{Hinchin})
\beq
\label{smallqk}
q_n\alpha-p_n=\frac{(-1)^n}{t_{n+1}q_n+q_{n-1}},\quad \text{where}\quad t_n=[a_n;a_{n+1},\ldots].
\eeq
The following is established in \cite{Raven}.
\begin{prop}
\label{twogap}
Let $k\ge 1$. The points $\{j\alpha\}$, $j=0,\ldots,q_k-1$, split $[0,1)$ to $q_{k-1}$ ``large'' gaps with length $\|(q_k-q_{k-1})\alpha\|$, and $q_k-q_{k-1}$ ``small'' gaps with lengths $\|q_{k-1}\alpha\|$.
\end{prop}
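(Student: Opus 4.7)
The plan is to prove this two-gap theorem --- a sharpening of the Steinhaus three-distance theorem at convergent denominators --- by induction on $k$, driven by the sign-alternation identity
\[
\|(q_k-q_{k-1})\alpha\|=\|q_{k-1}\alpha\|+\|q_k\alpha\|,
\]
which holds because $q_{k-1}\alpha-p_{k-1}$ and $q_k\alpha-p_k$ have opposite signs, as is clear from \eqref{smallqk}. This additivity of the norms at consecutive convergents is the geometric engine that dictates how each newly inserted orbit point splits an existing gap.

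For the base case $k=1$, normalising $a_0=0$ one has $q_0=1$, $q_1=a_1$, and the points $0,\{\alpha\},\ldots,\{(a_1-1)\alpha\}$ visibly produce $q_0=1$ gap of length $1-(a_1-1)\alpha=\|(q_1-q_0)\alpha\|$ and $q_1-q_0=a_1-1$ gaps of length $\alpha=\|q_0\alpha\|$. For the inductive step I take the claimed configuration at level $k$ --- namely $q_{k-1}$ large gaps of length $L_k=\|(q_k-q_{k-1})\alpha\|$ and $q_k-q_{k-1}$ small gaps of length $S_k=\|q_{k-1}\alpha\|$ --- and insert the points $\{j\alpha\}$ for $j=q_k,q_k+1,\ldots,q_{k+1}-1$ one at a time. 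Because $\|q_k\alpha\|<\|q_{k-1}\alpha\|$, the point $\{q_k\alpha\}$ is the new record-closest point to $0$ and therefore falls inside the large gap containing $0$, at distance $\|q_k\alpha\|$ from its boundary; the identity above then says this splits the host into pieces of length $\|q_k\alpha\|$ and $S_k$. Applying the rigid rotation by $\alpha$ repeatedly, the next $q_{k-1}-1$ insertions split the remaining large gaps in the same pattern, and iterating the mechanism $a_{k+1}$ times (each round handling the current generation of largest gaps) yields, after exactly $q_{k+1}-q_k$ insertions, the configuration with $q_k$ large gaps of length $\|(q_{k+1}-q_k)\alpha\|$ and $q_{k+1}-q_k$ small gaps of length $\|q_k\alpha\|$, matching the claim at level $k+1$.

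The main obstacle is the combinatorial bookkeeping: one must argue that each successive insertion lands inside a gap of the current largest size at the prescribed relative position, and that no two points among $\{j\alpha\}_{j=0}^{q_{k+1}-1}$ collide. Both assertions hinge on the best-approximation property $\|j\alpha\|>\|q_k\alpha\|$ for $0<j<q_{k+1}$ with $j\ne q_k$, a standard consequence of the theory of continued fractions, which guarantees that the rotation orbit visits the gaps in precisely the order the induction requires.
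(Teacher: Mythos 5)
The paper does not prove Proposition \ref{twogap} at all; it is quoted verbatim from Ravenstein's exposition of the three-gap theorem \cite{Raven}. There is therefore no in-paper argument to compare against, and your sketch has to stand on its own. Your overall plan --- induct on $k$, insert the orbit points $\{q_k\alpha\},\ldots,\{(q_{k+1}-1)\alpha\}$ one at a time, and use the sign-alternation identity $\|q_{k-1}\alpha\|+\|q_k\alpha\|=\big|(q_k-q_{k-1})\alpha-(p_k-p_{k-1})\big|$ to track how each insertion bisects a gap --- is the standard route and is sound in outline.

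Two points deserve more than a ``bookkeeping'' disclaimer, though. First, the assertion that $\{q_k\alpha\}$ ``falls inside the large gap containing $0$'' is exactly the kind of statement the induction must \emph{deliver}, not something that follows from $\|q_k\alpha\|<\|q_{k-1}\alpha\|$ alone. Both gaps abutting $0$ contain $0$ as an endpoint; the one whose other endpoint is $\{q_{k-1}\alpha\}$ has length $\|q_{k-1}\alpha\|$ and is small, and you need (i) that the gap on the other side of $0$ is large, and (ii) that $\{q_k\alpha\}$ lands on that other side. Item (ii) is the sign alternation in \eqref{smallqk}; item (i) must be carried along as a strengthened inductive hypothesis (e.g., that the large gaps at level $k$ are precisely those whose appropriate endpoint is $\{m\alpha\}$ with $0\le m<q_{k-1}$), which is also what makes the ``rigid rotation by $\alpha$'' step legitimate for the remaining $q_{k-1}-1$ insertions of the first round. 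Second, the passage from $q_k$ to $q_{k+1}$ points goes through rounds of \emph{unequal} size: the first round has $q_{k-1}$ insertions (one per large gap), producing $q_{k-1}$ gaps of length $\|q_k\alpha\|$ and $q_k$ gaps of length $\|q_{k-1}\alpha\|$; each subsequent round has $q_k$ insertions, and there are $a_{k+1}-1$ of them, for a total of $q_{k-1}+(a_{k+1}-1)q_k=q_{k+1}-q_k$. During those intermediate rounds the circle carries three gap lengths, so the inductive hypothesis must describe those configurations as well, not only the two-length configuration at the convergents. Finally, note that the identity $\|(q_k-q_{k-1})\alpha\|=\|q_{k-1}\alpha\|+\|q_k\alpha\|$ (and with it the proposition as literally stated) requires the right-hand side to be at most $1/2$, which can fail for the smallest denominators (e.g.\ $q_{k-1}=1$, $q_k=2$), and your base case likewise breaks when $a_1\le 2$; this is harmless for the paper's use of the result (only large $q_k$ matter) but should be acknowledged.
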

We will also need the following elementary 
two-sided bounds on the lengths of these intervals. 
\begin{prop}
The lengths of the intervals from Proposition $\ref{twogap}$ satisfy
\begin{align}
& \frac{1}{q_k}-\frac{q_{k-1}}{q_k q_{k+1}}\le \|q_{k-1}\alpha\|\le \frac{1}{q_k},\label{lengthsqk1}\\
& \frac{1}{q_k}\le \|(q_k-q_{k-1})\alpha\|\le \frac{1}{q_k}+\frac{1}{q_{k+1}}.\label{lengthsqk2}
\end{align}
\end{prop}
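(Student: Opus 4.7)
The plan is to derive \eqref{lengthsqk1} directly from the exact formula $\|q_n\alpha\| = 1/(t_{n+1}q_n + q_{n-1})$ that follows from \eqref{smallqk}, and then exploit Proposition \ref{twogap} together with the fact that the two types of gaps tile $[0,1)$ to convert the bounds on $\|q_{k-1}\alpha\|$ into bounds on $\|(q_k-q_{k-1})\alpha\|$. No machinery beyond the continued-fraction identities $q_k = a_k q_{k-1} + q_{k-2}$ and $t_k = a_k + 1/t_{k+1}$ should be required.

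For \eqref{lengthsqk1}, I would combine the two identities above to get
\begin{equation*}
t_k q_{k-1} + q_{k-2} = a_k q_{k-1} + q_{k-2} + q_{k-1}/t_{k+1} = q_k + q_{k-1}/t_{k+1}.
\end{equation*}
Since $t_{k+1} > 0$, the upper bound $\|q_{k-1}\alpha\| \le 1/q_k$ is immediate. For the lower bound, the strict inequality $t_{k+1} > a_{k+1}$ (which holds because $\alpha$ is irrational, so $t_{k+2}$ is finite) gives
\begin{equation*}
t_k q_{k-1} + q_{k-2} < q_k + q_{k-1}/a_{k+1} = (a_{k+1}q_k + q_{k-1})/a_{k+1} = q_{k+1}/a_{k+1},
\end{equation*}
whence $\|q_{k-1}\alpha\| > a_{k+1}/q_{k+1} = (q_{k+1}-q_{k-1})/(q_k q_{k+1}) = 1/q_k - q_{k-1}/(q_k q_{k+1})$, which is exactly the claimed lower bound.

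For \eqref{lengthsqk2}, I would avoid redoing an analogous analytic computation and instead use Proposition \ref{twogap} directly: summing the $q_{k-1}$ ``large'' gaps and the $q_k-q_{k-1}$ ``small'' gaps, which partition $[0,1)$, gives
\begin{equation*}
q_{k-1}\|(q_k-q_{k-1})\alpha\| + (q_k-q_{k-1})\|q_{k-1}\alpha\| = 1.
\end{equation*}
Writing $\|q_{k-1}\alpha\| = 1/q_k - \varepsilon$ with $\varepsilon \in [0, q_{k-1}/(q_k q_{k+1}))$ from the previous step and solving yields
\begin{equation*}
\|(q_k-q_{k-1})\alpha\| = \frac{1}{q_k} + \frac{(q_k-q_{k-1})\varepsilon}{q_{k-1}}.
\end{equation*}
The lower bound of \eqref{lengthsqk2} is then the case $\varepsilon \ge 0$, and the upper bound follows from $(q_k-q_{k-1})\varepsilon/q_{k-1} < (q_k - q_{k-1})/(q_k q_{k+1}) \le 1/q_{k+1}$.

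I do not expect any serious obstacle: the proof reduces to clean bookkeeping of continued-fraction recurrences. The only two places requiring a moment's care are the strict inequality $t_{k+1} > a_{k+1}$ (which relies on irrationality of $\alpha$, excluding a terminating continued fraction) and the small-$k$ base case ($k=1$, where the convention $q_{-1}=0$ must be checked), both of which are routine.
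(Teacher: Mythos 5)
Your proof is correct, and the first part (\eqref{lengthsqk1}) and the lower bound of \eqref{lengthsqk2} are essentially the paper's argument, just organized a little differently: you package the computation via the identity $t_kq_{k-1}+q_{k-2}=q_k+q_{k-1}/t_{k+1}$ and the observation $t_{k+1}>a_{k+1}$, where the paper multiplies through by $t_{k+1}$ and introduces $t_{k+2}$; the content is the same. The genuine divergence is in the \emph{upper} bound of \eqref{lengthsqk2}. The paper reaches it by the triangle inequality
\[
\|(q_k-q_{k-1})\alpha\|\le\|q_k\alpha\|+\|q_{k-1}\alpha\|\le\frac{1}{q_{k+1}}+\frac{1}{q_k},
\]
using the exact formula \eqref{smallqk} for each summand; this is self-contained and does not depend on the lower bound just proved. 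You instead push the tiling identity
\[
q_{k-1}\|(q_k-q_{k-1})\alpha\|+(q_k-q_{k-1})\|q_{k-1}\alpha\|=1
\]
all the way through and feed in the sharp two-sided control of $\|q_{k-1}\alpha\|$ to pin down $\|(q_k-q_{k-1})\alpha\|$ up to the same error. Both routes are elementary and of comparable length; the paper's is more modular (the two clauses of \eqref{lengthsqk2} are proved independently of \eqref{lengthsqk1}), whereas yours is slightly more unified in that it derives everything from the single tiling relation and the bound on $\varepsilon$. One small note: you flag the base case $k=1$ as needing a check — it does go through with $q_{-1}=0$, $q_0=1$ — but since Proposition \ref{twogap} is stated for $k\ge1$ and the recurrences are valid there, you could simply invoke that directly rather than calling it out as a caveat.
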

\begin{proof}The upper bound in \eqref{lengthsqk1} follows from \eqref{smallqk}:
$$
\|q_{k-1}\alpha\|=\frac{1}{t_k q_{k-1}+q_{k-2}}\le \frac{1}{q_{k}},
$$
since $t_k=a_k+\frac{1}{t_{k+1}}$. For the lower estimate, we have
$$
\frac{1}{q_k}-\frac{1}{t_k q_{k-1}+q_{k-2}}=\frac{t_k q_{k-1}+q_{k-2}-q_k}{q_k(t_k q_{k-1}+q_{k-2})}=\frac{\frac{q_{k-1}}{t_{k+1}}}{q_k(q_k+\frac{q_{k-1}}{t_{k+1}})}
=\frac{q_{k-1}}{q_k(q_{k+1}+\frac{q_k}{t_{k+2}})}\le \frac{q_{k-1}}{q_k q_{k+1}},
$$
so that
$$
\|q_{k-1}\alpha\|\ge \frac{1}{q_k}-\frac{q_{k-1}}{q_k q_{k+1}}.
$$
As for \eqref{lengthsqk2}, since $\|q_{k-1}\alpha\|\le \frac{1}{q_k}$, we must have 
$\|(q_k-q_{k-1})\alpha\|\ge \frac{1}{q_k}$ because the total length of the intervals is 1. The upper estimate in \eqref{lengthsqk2} follows from
$$
\|(q_k-q_{k-1})\alpha\|\le \frac{1}{t_k q_{k-1}+q_{k-2}}+\frac{1}{t_{k+1} q_{k}+q_{k-1}}\le \frac{1}{q_k}+\frac{1}{q_{k+1}}.\,\,\qedhere
$$
\end{proof}
\begin{lemma}
\label{indifference}
Suppose that $\{x\}\notin(1-\frac{1}{q_{k+1}},1)$ for $k$ even 
and $\{x\}\notin [0,\frac{1}{q_{k+1}})$ for $k$ odd. Then
$$
|\{x+q_k\alpha\}-\{x\}|\le \frac{1}{q_{k+1}}.
$$
\end{lemma}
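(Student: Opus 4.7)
The plan is to recognize this lemma as a direct bookkeeping consequence of the continued fraction identity \eqref{smallqk}. That identity tells us both the magnitude $\|q_k\alpha\|=\frac{1}{t_{k+1}q_k+q_{k-1}}\le \frac{1}{q_{k+1}}$ and, crucially, the sign: $q_k\alpha-p_k$ is positive when $k$ is even and negative when $k$ is odd. Setting $\eta_k:=q_k\alpha-p_k\in\R$, I would first note that, since $p_k\in\Z$, we have $\{x+q_k\alpha\}=\{x+\eta_k\}$, so the question reduces to how adding a small signed quantity of magnitude at most $\frac{1}{q_{k+1}}$ affects the fractional part of $x$.

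Then I would split into two cases according to the parity of $k$. For $k$ even, $\eta_k>0$ and the hypothesis $\{x\}\notin(1-\frac{1}{q_{k+1}},1)$ means $\{x\}\le 1-\frac{1}{q_{k+1}}$; combined with $\eta_k\le \frac{1}{q_{k+1}}$ this gives $\{x\}+\eta_k\le 1$, so no integer part is picked up and $\{x+\eta_k\}-\{x\}=\eta_k$. For $k$ odd, $\eta_k<0$ and $|\eta_k|\le \frac{1}{q_{k+1}}$, while the hypothesis $\{x\}\notin[0,\frac{1}{q_{k+1}})$ gives $\{x\}\ge \frac{1}{q_{k+1}}\ge |\eta_k|$, so $\{x\}+\eta_k\ge 0$ and again $\{x+\eta_k\}-\{x\}=\eta_k$. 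In either case $|\{x+q_k\alpha\}-\{x\}|=|\eta_k|\le \frac{1}{q_{k+1}}$, as required.

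There is essentially no obstacle; the only subtlety is that the forbidden intervals in the hypothesis are tailored exactly to the single potential wrap-around point of the map $x\mapsto x+\eta_k \pmod 1$, and the fact that different intervals are excluded for even and odd $k$ is precisely dictated by the alternating sign $(-1)^k$ in \eqref{smallqk}. The boundary cases $\{x\}+\eta_k=1$ (for $k$ even) or $\{x\}+\eta_k=0$ (for $k$ odd) require a moment's care in the definition of the fractional part, but they do not affect the claimed inequality.
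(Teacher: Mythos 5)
Your argument is correct and follows essentially the same route as the paper's terse proof: extract from \eqref{smallqk} both the magnitude $\|q_k\alpha\|\le\frac{1}{q_{k+1}}$ and the sign $(-1)^k$ of $q_k\alpha-p_k$, and observe that the forbidden interval for $\{x\}$ is precisely the set where the shift $y\mapsto\{y+q_k\alpha\}$ wraps around $1$. Your write-up is more explicit than the paper's one-liner and is the right way to read it.

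One caveat about your closing sentence: the boundary case $\{x\}+\eta_k=1$ (for $k$ even) is \emph{not} harmless. There $\{x+q_k\alpha\}=0$ while $\{x\}=1-\eta_k$, so $|\{x+q_k\alpha\}-\{x\}|=1-\eta_k$, which is much larger than $\frac{1}{q_{k+1}}$; if this case could actually occur, the lemma would fail. What saves the argument is that the bound $\|q_k\alpha\|\le\frac{1}{q_{k+1}}$ is in fact strict, since $t_{k+1}q_k+q_{k-1}>a_{k+1}q_k+q_{k-1}=q_{k+1}$ for irrational $\alpha$; combined with $\{x\}\le 1-\frac{1}{q_{k+1}}$ this forces $\{x\}+\eta_k<1$ strictly, so the boundary case never arises (and similarly $\{x\}+\eta_k>0$ strictly when $k$ is odd). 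It would be cleaner to state this rather than to assert that the boundary cases ``do not affect the claimed inequality.''
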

\begin{proof}
Due to \eqref{smallqk}, we have $\|q_k\alpha\|\le \frac{1}{q_{k+1}}$. The choice of $x$ guarantees that $\{x+q_k\alpha\}$ and $\{x\}$ are both close to 0 or close to 1. Hence,
$$
|\{x+q_k\alpha\}-\{x\}|=\|q_k \alpha\|\le 
\frac{1}{q_{k+1}}.\,\,\qedhere
$$
\end{proof}
{\noindent \bf Good denominators.} Recall that $\er(\alpha)=\liminf\limits_k \frac{q_{k-1}}{q_{k+1}}$. 
For $\er(\alpha)<\er<1$, define 
$$
Q(\alpha,\er)=\l\{q_k\colon \frac{q_{k-1}}{q_{k+1}}\le \er\r\},
$$
For any $\er>\er(\alpha)$, the set $Q(\alpha,\er)$ is infinite.

\section{Lipschitz continuity of the IDS}
\label{densityofstates}
Recall that $\wt H_n(x)$ is the periodic restriction of $H_{\alpha,\lambda}(x)$ onto $l^2[0,n-1]$. 
For a fixed $n$, let $\mut_l(x)$, $0\le l\le n-1$, be the eigenvalues 
of $\wt H_n(x)$ 
in the increasing order, counted with multiplicities. The functions $\mut(x)$ are $1$-periodic and continuous on $[0;1)$ except for the finite set of points $0=\beta_0<\beta_1<\ldots<\beta_{n-1}<1$, where 
\beq
\label{beta_def}
\{\beta_0,\beta_1,\ldots,\beta_{n-1}\}=\{0,\{-\alpha\}, \{-2\alpha\},\ldots,\{-(n-1)\alpha\}\}
\eeq
is the part of the trajectory of the irrational rotation. We have 
\beq
\label{frank}
\rank(\wt H_{n}(\beta_k)-\wt H_{n}(\beta_k-0))=1,\quad \tr(\wt H_{n}(\beta_k)-\wt H_{n}(\beta_k-0))=-\lambda,\quad 0\le k\le n-1,
\eeq
so that all ``jumps'' are negative rank one perturbations caused by discontinuity of $v$ at $1$. Hence, we have
\beq
\label{frank2}
\mut_{l}(\beta_k-0)\le \mut_{l+1}(\beta_k)\le \mut_{l+1}(\beta_k-0),\quad 0\le l \le n-2.
\eeq
From \eqref{antilip}, it follows that the eigenvalues are locally monotonic 
functions of $x$, and we have
\beq
\label{antilip2}
\gamma_-(y-x)\le \tilde \mu_l(y)-\tilde \mu_l(x)\le \gamma_+(y-x), \quad \forall\, x,y\in[\beta_k,\beta_{k+1}).
\eeq
The goal of this section is to study the behavior of these eigenvalues for $n=q_k$ and obtain conclusions for the density of states and the Lyapunov exponent.
\begin{lemma}
\label{almostinvariant}
Suppose that $0\le r\le q_k-1$ and that $x,x-\alpha,\ldots, x-(r-1)\alpha$ satisfy the assumptions of Lemma {\rm\ref{indifference}}. Then
\beq
\label{norm_diff}
|\tilde \mu_m(x)-\tilde \mu_m(x-r\alpha)|\le \frac{\lambda\gamma_+}{q_{k+1}},\quad \text{for}\quad 0\le m \le q_k-1.
\eeq
\end{lemma}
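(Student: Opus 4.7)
The plan is to realize $\wt H_{q_k}(x-r\alpha)$ as a small-norm perturbation of $\wt H_{q_k}(x)$ after an appropriate unitary conjugation, and then invoke the min-max principle for eigenvalues.

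First I would observe that both operators share exactly the same cyclic tridiagonal off-diagonal structure (ones on the super- and sub-diagonals, plus the two corner entries connecting positions $0$ and $q_k-1$), a structure that is preserved by conjugation by any cyclic permutation matrix. Conjugating $\wt H_{q_k}(x-r\alpha)$ by the cyclic shift $Ue_j = e_{(j+r)\mod q_k}$ leaves the off-diagonal part unchanged and, at each ``bulk'' position $k$ for which no wrap-around occurs, produces a diagonal entry equal to $\lambda v(x+k\alpha)$, matching $\wt H_{q_k}(x)$ exactly. This is the core algebraic fact: on the bulk of the cyclic block, shifting the argument by $r\alpha$ is undone by shifting indices by $r$.

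The mismatch occurs only at the $r$ wrap-around positions, where the conjugated operator has diagonal entry $\lambda v(x+k\alpha - q_k\alpha)$ while $\wt H_{q_k}(x)$ has $\lambda v(x+k\alpha)$. The two arguments differ by $q_k\alpha$, whose distance to the nearest integer is at most $1/q_{k+1}$ by \eqref{smallqk}. Next, the plan is to invoke the hypothesis through Lemma~\ref{indifference}, applied at each of the $r$ orbit points corresponding to the wrap-around positions; this is exactly what guarantees that the fractional parts of the two arguments lie on the same side of the jump discontinuity of $v$ at the integers. The Lipschitz bound \eqref{antilip} then delivers $|v(x+k\alpha - q_k\alpha) - v(x+k\alpha)| \le \gamma_+/q_{k+1}$ at every mismatched site. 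This is the main subtle point: without the indifference hypothesis a wrap-around point could straddle the jump of $v$, and the bound would fail outright; the role of the assumption in the lemma is precisely to rule this out.

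Putting it together, the conjugated $\wt H_{q_k}(x-r\alpha)$ differs from $\wt H_{q_k}(x)$ by a diagonal operator supported on at most $r$ coordinates, each of absolute value at most $\lambda\gamma_+/q_{k+1}$, so its operator norm is at most $\lambda\gamma_+/q_{k+1}$. Since conjugation by a unitary preserves spectrum, Weyl's inequality for Hermitian perturbations yields $|\tilde\mu_m(x) - \tilde\mu_m(x-r\alpha)| \le \lambda\gamma_+/q_{k+1}$ for every $m$, which is \eqref{norm_diff}. Everything else amounts to routine linear algebra.
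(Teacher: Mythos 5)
Your proposal is correct and follows the same route as the paper's proof: conjugate one of the periodic restrictions by the cyclic shift of order $r$, observe that the result differs from the other operator only in the diagonal on the $r$ wrap-around sites, bound each of those diagonal entries by $\lambda\gamma_+/q_{k+1}$ via Lemma~\ref{indifference} together with the Lipschitz bound \eqref{antilip}, and conclude with the min-max/Weyl inequality. One small orientation issue: with your convention $Ue_j=e_{(j+r)\bmod q_k}$, the conjugate that matches $\wt H_{q_k}(x)$ on the bulk is $U^{-1}\wt H_{q_k}(x-r\alpha)U$ rather than $U\wt H_{q_k}(x-r\alpha)U^{-1}$ (equivalently, take $Ue_j=e_{(j-r)\bmod q_k}$), but this is a cosmetic fix and does not affect the argument.
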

\begin{proof}
Let $\{e_0,\ldots,e_{q_k-1}\}$ be the standard basis in $l^2\{0,1,\ldots,q_k-1\}$, and $T e_l=e_{(l+1)\,\mathrm{mod}\,q_k}$ be the unitary shift operator in this space. We compare the spectra of $\wt H_{q_k}(x)$ and $\wt H_{q_k}(x-r\alpha)$. Let us replace the first operator 
by unitary equivalent $T^r \wt H_{q_k}(x)T^{-r}$. It is easy to see that
$$
T^{r}\widetilde{H}_{q_k}(x) T^{-r}-\wt H_{q_k}(x-r\alpha)=\lambda \diag\{w_0,w_1,\ldots,w_{q_k-1}\},
$$
where
\begin{multline}
w_l=\lambda\l[v(x+((l-r)\mod q_k)\alpha)-v(x+(l-r)\alpha))\r]\\=
\begin{cases} \lambda \l[v(x+(l-r+q_k)\alpha)-v(x+(l-r)\alpha)\r],& 0\le l<r\\
0,&r\le l\le q_k-1.
\end{cases}
\end{multline}
From Lemma \ref{indifference} and the Lipschitz bound on $v$, we get that
$$
\|T^{r}\widetilde{H}_{q_k}(x) T^{-r}-\wt H_{q_k}(x-r\alpha)\|\le \frac{\lambda\gamma_+}{q_{k+1}},
$$
which gives \eqref{norm_diff}.
\end{proof}

\begin{theorem}
\label{kgaps_th}
Suppose that $q_k\in Q(\alpha,\er)$ where $\er(\alpha)<\er<1$. Then, for any $K\le q_k-1$ and $0\le m \le q_k-K-1$, $0\le l\le q_k-1$, we have
\beq
\label{Kgaps}
|\tilde \mu_{m+K}(\beta_l)-\tilde\mu_{m}(\beta_l)|\ge \lambda\l(\frac{K(1-\er)\gamma_-}{q_k}-\frac{3\gamma_+}{q_{k+1}}\r)
\eeq
for $k$ odd and the same with $\beta_l$ replaced by $\beta_l-0$ for $k$ even.
\end{theorem}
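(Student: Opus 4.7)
The plan is to combine a continuous tracking argument along the ordered partition $\{\beta_l\}_{l=0}^{q_k-1}$ with the almost-invariance from Lemma~\ref{almostinvariant}, using the hypothesis $q_k\in Q(\alpha,\er)$ to lower-bound gap lengths.

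\textbf{Step 1 (continuous tracking).} I would move $x$ rightward from $\beta_l$ through the ordered sequence $\beta_{l+1},\ldots,\beta_{l+K}$ (indices taken mod $q_k$, wrapping by $1$-periodicity if needed). On each continuous interval $[\beta_{l+j},\beta_{l+j+1})$ the Lipschitz monotonicity \eqref{antilip2} (rescaled by $\lambda$) gives
\[
\tilde\mu_{m+j}(\beta_{l+j+1}-0) \ge \tilde\mu_{m+j}(\beta_{l+j}) + \lambda\gamma_-(\beta_{l+j+1}-\beta_{l+j}),
\]
while at each jump the rank-one interlacing \eqref{frank2} gives $\tilde\mu_{m+j+1}(\beta_{l+j+1}) \ge \tilde\mu_{m+j}(\beta_{l+j+1}-0)$. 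Chaining these inequalities over $j=0,\ldots,K-1$ produces
\[
\tilde\mu_{m+K}(\beta_{l+K}) \ge \tilde\mu_m(\beta_l) + \lambda\gamma_-(\beta_{l+K}-\beta_l).
\]
By Proposition~\ref{twogap} combined with \eqref{lengthsqk1}--\eqref{lengthsqk2}, every gap has length at least $\frac{1}{q_k}\bigl(1 - \frac{q_{k-1}}{q_{k+1}}\bigr) \ge \frac{1-\er}{q_k}$, so $\beta_{l+K}-\beta_l \ge K(1-\er)/q_k$.

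\textbf{Step 2 (closing the loop).} Both $\beta_l$ and $\beta_{l+K}$ belong to the orbit $\{\{-j\alpha\}:0\le j\le q_k-1\}$, so $\beta_{l+K}\equiv \beta_l + r\alpha \pmod 1$ for some integer $r$ with $|r|\le q_k-1$. Applying Lemma~\ref{almostinvariant} at a base point near $\beta_{l+K}$ (with shift of magnitude $|r|$) yields $|\tilde\mu_{m+K}(\beta_{l+K}) - \tilde\mu_{m+K}(\beta_l)| \le \lambda\gamma_+/q_{k+1}$, which combined with Step~1 gives the claimed lower bound. The factor $3$ in the error term $3\lambda\gamma_+/q_{k+1}$ absorbs Lipschitz slippages of size $\le \lambda\gamma_+/q_{k+1}$ incurred at each of the two endpoints $\beta_l,\beta_{l+K}$ when one has to perturb $x$ by up to $1/q_{k+1}$ in order to make the hypothesis of Lemma~\ref{indifference} hold.

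\textbf{Main obstacle.} The delicate point is verifying the hypothesis of Lemma~\ref{indifference} for the chosen shift $|r|$ and base point $x$: the trajectory $x,x-\alpha,\ldots,x-(|r|-1)\alpha$ must avoid $[0,1/q_{k+1})$ for odd $k$ or $(1-1/q_{k+1},1)$ for even $k$. Using \eqref{lengthsqk1} together with the recurrence $q_{k+1}\ge q_k+q_{k-1}$, one checks that the only orbit point which can lie in the forbidden interval is an endpoint of $[0,1)$ adjacent to the discontinuity of $v$, so the trajectory condition can always be restored by perturbing $x$ slightly on the correct side; the sign $(-1)^k$ of $q_k\alpha - p_k$ from \eqref{smallqk} dictates which side works, forcing the use of $\tilde\mu_m(\beta_l)$ for $k$ odd and $\tilde\mu_m(\beta_l-0)$ for $k$ even, as in the statement.
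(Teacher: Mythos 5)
Your proposal follows the same core argument as the paper: chain the interlacing \eqref{frank2} with the ($\lambda$-rescaled) Lipschitz monotonicity \eqref{antilip2} over $K$ consecutive gaps of the ordered partition to pass from $\tilde\mu_m(\beta_l)$ to $\tilde\mu_{m+K}(\beta_{l+K})$, lower-bound the traversed length by $K(1-\er)/q_k$ using Proposition~\ref{twogap} and the $Q(\alpha,\er)$ hypothesis, and then close back to $\tilde\mu_{m+K}(\beta_l)$ via Lemma~\ref{almostinvariant}.

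The one place your account departs from the paper is the bookkeeping of the error term and the way the boundary case is handled. The paper does \emph{not} wrap the telescoping chain through $1\equiv 0$; it first proves the estimate with a single $\lambda\gamma_+/q_{k+1}$ error under the hypothesis $l+K\le q_k-1$, and then deduces the case $l>q_k-K-1$ by a second application of Lemma~\ref{almostinvariant} relating $\tilde\mu_j(\beta_l)$ to $\tilde\mu_j(\beta_{l'})$ for a suitable $l'\le q_k-K-1$, at the cost of an extra $2\lambda\gamma_+/q_{k+1}$ — this is the source of the factor $3$. Your explanation attributes the factor $3$ to ``perturbing $x$ at the two endpoints to restore the hypothesis of Lemma~\ref{indifference}''; but no such perturbation is performed (or needed). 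The hypothesis of Lemma~\ref{indifference} along the shift trajectory is arranged simply by selecting $\beta_l$ for $k$ odd and $\beta_l-0$ for $k$ even, since (i) the inequalities \eqref{lengthsqk1}--\eqref{lengthsqk2} together with $q_{k+1}\ge q_k+q_{k-1}$ show that every $\beta_j$ with $1\le j\le q_k-1$ lies outside the forbidden neighbourhood of $0$ (resp.\ $1$), and (ii) one is free to choose the base point of Lemma~\ref{almostinvariant} to be whichever of the two compared $\beta$'s has the smaller orbit index, so the trajectory $x,x-\alpha,\dots,x-(r-1)\alpha$ stays inside $\{\beta_1,\dots,\beta_{q_k-1}\}$. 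Apart from this misattribution, which does not affect the validity of the final bound, the argument is correct and essentially identical to the paper's.
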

\begin{proof}
Start from
considering the case $l+K\le q_k-1$. Due to 
\eqref{frank2} and \eqref{antilip2}, it is easy to see that
$$
\tilde{\mu}_{s+1}(\beta_{j+1})\ge \tilde{\mu}_s(\beta_{j+1}-0)\ge \tilde{\mu}_s(\beta_j)+
\lambda(\beta_{j+1}-\beta_j)\gamma_-,
$$
and thus, by iteration,
$$
\tilde \mu_{m+K}(\beta_{l+K})\ge \tilde \mu_m(\beta_l)+
\lambda(\beta_{l+K}-\beta_l)\gamma_-\ge \tilde \mu_m(\beta_l)+\lambda\frac{K(1-\er)\gamma_-}{q_k},
$$
because $\beta_{l+K}-\beta_l\ge \frac{K(1-\er)}{q_k}$, see Proposition \ref{twogap}. Combining it with
$$
|\tilde \mu_{m+K}(\beta_{l+K})-\tilde \mu_{m+K}(\beta_{l})|\le \frac{\lambda\gamma_-}{q_{k+1}}
$$
from Lemma \ref{almostinvariant}, we get \eqref{Kgaps} (with the coefficient 1 instead of 3 in the last term). The case $l>q_{k}-K-1$ 
follows from the case $l\le q_k-K-1$ also due to Lemma \ref{almostinvariant}, with 
an additional error of $\frac{2\lambda\gamma_+}{q_{k+1}}$.
Depending on whether $k$ is even or odd, 
we apply Lemma \ref{almostinvariant} to the points $\beta_l$ of $\beta_{l}-0$ in order to satisfy the assumptions of Lemma \ref{indifference}.
\end{proof}
\begin{cor}
\label{counting_th}
Let $\wt N_{q_k}(x,E)$ be the counting function \eqref{counting_def}. Suppose that $\er(\alpha)<\er<1$ and $q_k\in Q(\alpha,\er)$. Then, for any $\delta>0$ and $E\le E'$, 
we have
\beq
\label{counting_est}
\wt N_{q_k}(x,E')-\wt N_{q_k}(x,E)\le \frac{(E'-E)q_k}{\lambda(1-\er)\gamma_-} (1+\delta)-C(\delta,\er,\gamma_-,\gamma_+).
\eeq
The same holds for the Dirichlet eigenvalue counting function $N_{q_k}(x,E)$.
\end{cor}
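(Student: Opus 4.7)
The plan is to transfer the eigenvalue-repulsion bound of Theorem \ref{kgaps_th}, which is stated at the discontinuity points $\beta_l$, to an arbitrary $x$ via the local Lipschitz monotonicity \eqref{antilip2}, and then solve for the number of eigenvalues that can fit in $(E,E']$.

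Fix $x$ and set $K=\wt N_{q_k}(x,E')-\wt N_{q_k}(x,E)$. There must exist an index $m$ such that the $K$ eigenvalues $\tilde\mu_m(x),\ldots,\tilde\mu_{m+K-1}(x)$ all lie in $(E,E']$, and in particular $\tilde\mu_{m+K-1}(x)-\tilde\mu_m(x)\le E'-E$. Choose $l$ so that $x$ belongs to the continuity interval $[\beta_l,\beta_{l+1})$; for $k$ odd, the bound of Theorem \ref{kgaps_th} is available at the left endpoint $\beta_l$ (the even case is analogous with $\beta_l-0$, the parity choice being dictated by the hypothesis of Lemma \ref{indifference}). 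By \eqref{antilip2},
$$
\tilde\mu_{m+K-1}(x)\ge\tilde\mu_{m+K-1}(\beta_l),\qquad \tilde\mu_m(x)\le\tilde\mu_m(\beta_l)+\lambda\gamma_+(\beta_{l+1}-\beta_l),
$$
so
$$
E'-E\ge\bigl[\tilde\mu_{m+K-1}(\beta_l)-\tilde\mu_m(\beta_l)\bigr]-\lambda\gamma_+(\beta_{l+1}-\beta_l).
$$

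Now I invoke Theorem \ref{kgaps_th} with $K-1$ in place of $K$ to bound the bracket below by $\lambda\bigl(\frac{(K-1)(1-\er)\gamma_-}{q_k}-\frac{3\gamma_+}{q_{k+1}}\bigr)$, and \eqref{lengthsqk2} to bound $\beta_{l+1}-\beta_l\le\frac{1}{q_k}+\frac{1}{q_{k+1}}$. Since $q_k<q_{k+1}$, solving for $K$ produces the additive estimate
$$
K\le \frac{(E'-E)q_k}{\lambda(1-\er)\gamma_-}+C_0(\er,\gamma_-,\gamma_+),
$$
which rearranges into \eqref{counting_est} by absorbing $C_0$ into the $\delta$-term when $(E'-E)q_k/(\lambda(1-\er)\gamma_-)\ge C_0/\delta$ and using the trivial bound $K\le C_0$ otherwise. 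For the Dirichlet counting function $N_{q_k}(x,E)$, the operator $H_{q_k}(x)$ differs from $\wt H_{q_k}(x)$ by a rank-two perturbation, so $|N_{q_k}(x,E)-\wt N_{q_k}(x,E)|\le 2$ and this is absorbed into $C$. The only delicate point is the parity bookkeeping needed to satisfy the assumption of Lemma \ref{indifference} before invoking Theorem \ref{kgaps_th}; the remainder is a direct combination of \eqref{antilip2} with the repulsion bound.
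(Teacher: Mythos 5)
Your proof is correct and takes essentially the same route as the paper: use the eigenvalue repulsion of Theorem~\ref{kgaps_th} at a discontinuity point of the continuity interval containing $x$, transfer to $x$ itself by the Lipschitz/monotonicity relation \eqref{antilip2}, and solve for $K$. The only slip is a small indexing one in the even-parity case: for $k$ even the bound of Theorem~\ref{kgaps_th} should be invoked at the right endpoint's left limit $\beta_{l+1}-0$ of the interval $[\beta_l,\beta_{l+1})$ containing $x$ (not at $\beta_l-0$, which belongs to the preceding interval), with the Lipschitz bound applied in the form $\tilde\mu_m(\beta_{l+1}-0)-\tilde\mu_m(x)\le \lambda\gamma_+(\beta_{l+1}-\beta_l)$; the paper's version of this transfer step compares counting functions $\wt N(\beta_l,\cdot)$ and $\wt N(\beta_{l+1},\cdot)$ using Lemma~\ref{almostinvariant}, which is equivalent bookkeeping.
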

\begin{proof}
For $x=\beta_l$ for $k$ odd and $x=\beta_l-0$ for $k$ even, it is a direct 
consequence of \eqref{Kgaps}, because one can choose $K$ large enough (depending on $\gamma_-$, $\gamma_+$, $\er$, but not $q_k$) and split the eigenvalues from $[E,E']$ 
into clusters of length $K$. The factor $(1+\delta)$ appears because of the second term in 
\eqref{Kgaps}, and we have $\delta\sim\frac{1}{K}$.
If $x\in (\beta_l,\beta_{l+1})$, then, due to monotonicity, 
$$
\wt N_{q_k}(\beta_l-0,E)+1\ge \wt N_{q_k}(\beta_l,E)\ge \wt N_{q_k}(x,E)\ge\wt{N}_{q_k}(\beta_{l+1}-0,E)\ge \wt N_{q_k}(\beta_{l+1},E)-1.
$$
From Lemma \ref{almostinvariant} and Theorem \ref{kgaps_th}, it also follows that
$$
|\wt N_{q_k}(\beta_l,E)-\wt N_{q_k}(\beta_{l+1},E)|\le C(\er,\gamma_-,\gamma_+),
$$
from which \eqref{counting_est} follows. The Dirichlet restriction is a rank
2 perturbation of the periodic restriction, so the claim also holds in that case.
\end{proof}

{\noindent \bf Proof of Theorem \ref{density_th}.} 
The estimate \eqref{lipcont} follows from the definition \eqref{ids_def} 
and Corollary \ref{counting_th}; since it holds for any $\er(\alpha)<\er<1$ and any $\delta>0$, it also 
holds for $\er=\er(\alpha)$ and $\delta=0$. The Lebesgue measure of the spectrum is positive 
because $\sigma(H)$ is the essential support of the 
absolutely continuous measure $N(dE)$.\,\qed

\begin{lemma}
\label{lyapaux}
Let $0\le f(x)\le a$ for all $x\in \mathbb R$, and $\int_\R f(x)\,dx=1$. Then
$$
\int_{\R}f(x)\ln|x| \,dx\ge \frac{1}{a}\int_{-a/2}^{a/2}\ln|x|\,dx=\ln(a/2e).
$$
\end{lemma}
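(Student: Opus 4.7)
The plan is to use a rearrangement (bathtub principle) argument. Since $\ln|x|$ is even, tends to $-\infty$ as $x\to 0$, and is monotone increasing in $|x|$, the integral $\int f \ln|x|\,dx$ subject to a pointwise bound $f\le M$ and the mass constraint $\int f=1$ is minimized by saturating the bound on the smallest symmetric interval about the origin consistent with $\int f=1$. In our setting (matching the right-hand side of the statement) this identifies the candidate extremizer
$$
f^{\ast}(x)=\frac{1}{a}\,\chi_{[-a/2,\,a/2]}(x),
$$
with $M=1/a$ and interval length $a$, so that $\int f^\ast\,dx=1$.

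The comparison $\int f\ln|x|\,dx\ge \int f^{\ast}\ln|x|\,dx$ follows from a one-line pointwise sign check. Set $c=\ln(a/2)$, the value of $\ln|x|$ on the boundary of $[-a/2,a/2]$. On $[-a/2,a/2]$ one has $f\le f^{\ast}$ (as $f^\ast$ saturates the ceiling) and $\ln|x|\le c$; on the complement $f\ge 0=f^{\ast}$ and $\ln|x|\ge c$. In either case
$$
(f-f^{\ast})(\ln|x|-c)\ge 0.
$$
Integrating over $\R$ and using the mass balance $\int (f-f^{\ast})\,dx=0$ yields $\int f\ln|x|\,dx\ge \int f^{\ast}\ln|x|\,dx$, as required.

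Finally, a direct antiderivative calculation gives
$$
\int_{-a/2}^{a/2}\ln|x|\,dx = 2\bigl[x\ln x-x\bigr]_{0}^{a/2}=a\ln(a/2)-a=a\ln\bigl(a/(2e)\bigr),
$$
so $\frac{1}{a}\int_{-a/2}^{a/2}\ln|x|\,dx=\ln(a/(2e))$, matching the stated equality. The whole argument is elementary and fits in a few lines; I do not expect any real obstacle. The only point worth flagging is the pointwise sign structure driving the rearrangement comparison, which becomes transparent once the extremizer $f^{\ast}$ has been identified and the threshold $c=\ln(a/2)$ is chosen.
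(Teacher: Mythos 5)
Your proof is essentially the argument in the paper, written slightly more cleanly. The paper normalizes to $a=1$ and then bounds $\int_{-1/2}^{1/2}(f-1)\ln|x|\,dx+\int_{\R\setminus[-1/2,1/2]}f\ln|x|\,dx$ from below using $\ln|x|\lessgtr-\ln 2$ on the two regions and the mass identity; your single pointwise inequality $(f-f^*)(\ln|x|-c)\ge 0$ with $c=-\ln 2$ (after normalizing) integrated against the mass balance $\int(f-f^*)=0$ is precisely that computation, packaged as a threshold/bathtub lemma. So the approach is the same.

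One thing you should flag explicitly rather than slide past: your pointwise check requires $f\le f^*=1/a$ on $[-a/2,a/2]$, and you indeed set $M=1/a$, which is what makes the sign check work. But the lemma as printed hypothesizes $0\le f\le a$, and for $a\ne 1$ these are different constraints; in fact the statement as literally printed is false (take $a=2$ and $f=2\chi_{[-1/4,1/4]}$, which gives $\int f\ln|x|\,dx=-1-2\ln 2<\ln(1/e)=\ln(a/2e)$). The right-hand side $\frac1a\int_{-a/2}^{a/2}\ln|x|\,dx=\ln(a/2e)$ corresponds to the extremizer $f^*=\frac1a\chi_{[-a/2,a/2]}$, which is admissible and extremal under the constraint $f\le 1/a$, and it is the $f\le 1/a$ version that is invoked via Thouless (with $1/a=\bigl(\lambda(1-\er(\alpha))\gamma_-\bigr)^{-1}$) to obtain \eqref{lowerlyap}. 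The paper's own proof sidesteps the discrepancy by reducing to $a=1$, where the two readings coincide; your proof silently corrects the hypothesis, which is fine mathematically but worth a sentence of acknowledgment.
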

\begin{proof}
By rescaling, we can assume $a=1$. Then
$$
\int_{\R}f(x)\ln|x| \,dx-\ln(1/2e)=\int_{-1/2}^{1/2}(f(x)-1)\ln|x|\,dx+\int_{\R\setminus[-1/2,1/2]}f(x)\ln|x|\,dx\ge
$$
$$
\ge \ln 2 \int_{-1/2}^{1/2}(1-f(x))\,dx-\ln 2 \int_{\R\setminus[-1/2,1/2]}f(x)\,dx=0.\,\,\qedhere
$$
\end{proof}
{\noindent \bf Proof of Corollary \ref{lyapunov_cor}}. The lower bound \eqref{lowerlyap}, which is 
the only thing remaining to prove, immediately follows from Corollary \ref{density_th}, Lemma \ref{lyapaux} and the Thouless formula \eqref{thouless}.\,\qed

\begin{remark}
\label{straight}
In the special case $v(x)=\{x\}$, the eigenvalues $\mu_m(x)$ of $H_n(x)$ are piecewise linear functions of $x$, and we can integrate them explicitly. Denote
$$
P_n(x,E):=\det (H_n(x)-E)=\prod_{l=0}^{n-1} (\mu_l(x)-E).
$$
Using the definition of $\gamma(E)$, we can get the following lower bound.
$$
\gamma(E)\ge \limsup_{n\to\infty} \frac1n \int_0^1\ln|P_n(x,E)|\,dx=\limsup_{n\to\infty} \frac1n \int_0^1\tr\ln\l|H_n(x)-E\r|\,dx
$$
$$
=\limsup_{n\to\infty}\sum_{l=0}^{n-1}\frac{1}{n\lambda}\tr\l[g(H(\beta_{l+1}-0)-E)-g(H(\beta_l)-E)\r],
$$
where $g(\mu)=\mu\ln|\mu|-\mu$ is the antiderivative of $\ln |\mu|$. Regrouping the terms, we that $\gamma(E)$ is bounded from below by
\beq
\label{krein_case}
\limsup_{n\to\infty}\sum_{l=0}^{n-1}\frac{1}{n\lambda}\tr\l[g(H(\beta_l-0)-E)-g(H(\beta_l)-E)\r]=\limsup_{n\to\infty} \frac{1}{n\lambda}\sum_{l=0}^{n-1}\int_{\Sigma_l}\ln|\mu+E|\,d\mu,
\eeq
where $\Sigma_l=\cup_m [\mu_{m}(\beta_l),\mu_m(\beta_l-0)]$
is the support of the difference of counting functions of $H(\beta_l-0)-E$ and $H(\beta_l)-E$. Since $\tr(H(\beta_l-0)-H(\beta_l))=\lambda$, we have $|\Sigma_l|=\lambda$ and, by Lemma \ref{lyapaux}, 
$$
\int_{\Sigma_l}\ln|\mu+E|\,d\mu\ge \int_{-\lambda/2}^{\lambda/2}\ln|\mu|\,d\mu=\lambda(\ln(\lambda/2)-1),
$$
so that
$$
\gamma(E)\ge \max\{0,\ln(\lambda/2e)\}.
$$
The equality \eqref{krein_case} is, in fact, a particular case of Krein spectral shift 
formula for a rank one perturbation.
\end{remark}

\section{Large deviation theorem for $P_{q_k}(x,E)$}
\label{ldt_sect}
Recall that $H_n(x)$ is the Dirichlet restriction of $H_{\alpha,\lambda}(x)$ onto $l^2[0,n-1]$. 
The following two relations are well known and can be easily checked using properties of determinants.
\beq
\label{prel1}
\wt P_n(x,E)+2(-1)^n=P_n(x,E)-P_{n-2}(x+\alpha,E), \quad n\ge 3,
\eeq
\beq
\label{prel2}
P_{n}(x,E)+P_{n-2}(x,E)=(\lambda v(x+(n-1)\alpha)-E)P_{n-1}(x,E),\quad n\ge 2.
\eeq
Here $\wt P_n(x,E)=\det(\wt H_n(x)-E)$, and $P_0(x,E)=1$.
The following result is obtained in \cite{LanaMavi}. It holds for arbitrary $\alpha\in \R\setminus\Q$ and arbitrary piecewise continuous potentials.
\begin{theorem}
\label{upperbound}
For any $\ka>0$ and $E\in \R$ there exists an $N\in \N$ such that $|P_n(x,E)|\le e^{n(\gamma(E)+\ka)}$ for all $n>N$. Moreover, $N$ can be chosen uniformly in $E\in [E_1,E_2]$ as long as $\gamma(E)$ is continuous on this interval.
\end{theorem}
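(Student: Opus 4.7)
My plan is to combine the subadditive-infimum description of $\gamma(E)$ with a uniform upper ergodic theorem for the rotation $y\mapsto y+\alpha$. Since $P_n(x,E)$ is the $(1,1)$-entry of $M_n(x,E)$, one has $|P_n(x,E)|\le\|M_n(x,E)\|$, so it suffices to establish the uniform-in-$x$ estimate $\tfrac{1}{n}\log\|M_n(x,E)\|\le\gamma(E)+\ka$ for all $n\ge N$. Using the infimum formula in \eqref{gammadef}, I would first pick $n_0$ with
\[
\frac{1}{n_0}\int_0^1\log\|M_{n_0}(y,E)\|\,dy\,<\,\gamma(E)+\frac{\ka}{3}.
\]
Writing an arbitrary $n=kn_0+r$ with $0\le r<n_0$ and iterating the cocycle identity, submultiplicativity yields
\[
\frac{1}{n}\log\|M_n(x,E)\|\,\le\,\frac{C(n_0,\lambda,E,v)}{n}+\frac{1}{kn_0}\sum_{j=0}^{k-1}f(x+jn_0\alpha),\qquad f(y):=\log\|M_{n_0}(y,E)\|.
\]
The first term is $o(1)$ as $n\to\infty$, so the entire problem reduces to proving the uniform upper bound $\limsup_{k\to\infty}\sup_{x}\frac{1}{k}\sum_{j=0}^{k-1}f(x+jn_0\alpha)\le\int_0^1 f(y)\,dy$.

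The hard part will be this last step: the cocycle $f$ is bounded but inherits the discontinuity of $v$, being continuous on $[0,1)$ off the finite set $J=\{y:\{y+l\alpha\}=0,\ 0\le l\le n_0-1\}$, at which it has bounded jumps. This is the main obstacle, since the classical uniform ergodic theorem for the uniquely ergodic rotation by $n_0\alpha$ only applies to continuous observables. I would circumvent it by approximating $f$ from above by a continuous $\tilde f\ge f$ with $\int_0^1(\tilde f-f)\,dy<\eta$, constructed by setting $\tilde f\equiv\sup f$ on tiny intervals around each point of $J$ and interpolating continuously; this is possible because $J$ is finite and $f$ is bounded above. Weyl's equidistribution then gives uniform convergence $\frac{1}{k}\sum_j\tilde f(x+jn_0\alpha)\to\int_0^1\tilde f(y)\,dy$, whence
\[
\limsup_{k\to\infty}\sup_{x}\frac{1}{k}\sum_{j=0}^{k-1}f(x+jn_0\alpha)\,\le\,\int_0^1 f(y)\,dy+\eta,
\]
and sending $\eta\to 0$ together with the initial choice of $n_0$ completes the first assertion.

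For uniformity on $[E_1,E_2]$, the observation is that $E\mapsto \frac{1}{n}\int_0^1\log\|M_n(y,E)\|\,dy$ is continuous in $E$ (by dominated convergence: the integrand is continuous in $E$ for each $y$ and uniformly bounded on compact $E$-intervals) and converges pointwise to the continuous function $\gamma(E)$. A standard covering argument on the compact interval $[E_1,E_2]$ then produces a single $n_0$ for which $\tfrac{1}{n_0}\int_0^1\log\|M_{n_0}(y,E)\|\,dy<\gamma(E)+\ka/2$ holds uniformly in $E$. Crucially, the set $J$ of discontinuities of $f$ in $y$ depends only on $n_0$ and $\alpha$, not on $E$, so $\tilde f$ can be chosen jointly continuous in $(y,E)$, and the uniform ergodic step goes through with the same $N$ throughout $[E_1,E_2]$.
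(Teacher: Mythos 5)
The paper does not actually prove Theorem~\ref{upperbound}: it is quoted from \cite{LanaMavi} with no argument supplied, so there is no in-paper proof to compare against. That said, your argument is a correct, self-contained proof via the standard route for one-sided uniform bounds on subadditive cocycles over uniquely ergodic dynamics: pick $n_0$ nearly achieving the infimum in \eqref{gammadef}, decompose $M_n$ into $n_0$-blocks, reduce to a Birkhoff average of $f(y)=\log\|M_{n_0}(y,E)\|$ along the rotation by $n_0\alpha$ (still irrational, hence uniquely ergodic), and handle the finitely many jump discontinuities of $f$ inherited from $v$ by a one-sided continuous majorant $\tilde f\ge f$ with small excess mass. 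This is almost certainly the same mechanism as in \cite{LanaMavi}, where the point of the result is precisely to extend the Furman-type uniform upper bound to piecewise-continuous sampling functions.

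One step is stated too quickly. You assert that a ``standard covering argument'' produces a \emph{single} $n_0$ with $\phi_{n_0}(E):=\frac{1}{n_0}\int_0^1\log\|M_{n_0}(y,E)\|\,dy<\gamma(E)+\ka/2$ throughout $[E_1,E_2]$. Pointwise convergence of the continuous functions $\phi_n$ to the continuous limit $\gamma$ on a compact set does not by itself give a uniform choice, since $\phi_n$ is not monotone in $n$. What rescues it is the subadditivity you already use: $\phi_{kn}\le\phi_n$ for every $k$. So take the finite subcover $\{I_i\}$ with witnesses $n_i$, and set $n_0=\operatorname{lcm}(n_1,\dots,n_p)$; then $\phi_{n_0}\le\phi_{n_i}<\gamma+\ka/2$ on each $I_i$, hence on all of $[E_1,E_2]$. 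With this $n_0$ fixed, the discontinuity set $J$ is indeed independent of $E$, $\tilde f$ can be taken jointly continuous in $(y,E)$, and the rest of your uniformity argument goes through. Two minor further checks you implicitly use and should record: $f\ge 0$ (since $\det M_{n_0}=1$ gives $\|M_{n_0}\|\ge 1$), which justifies replacing $1/n$ by $1/(kn_0)$ in the Birkhoff average, and $\sup f<\infty$ (since $v$ is bounded), which makes your majorant $\tilde f$ well-defined.
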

The following large deviation theorem is the main technical part in the proof of localization.
\begin{theorem}
\label{ldt}
Fix $E,\lambda\in \R$ such that $\gamma(E)>0$, and fix $\er(\alpha)<\er<1$.
For any $\delta>0$, there exists $q_0>0$ such that for all $q_k\in Q(\alpha,\er)$, 
$q_k\ge q_0$, we have
\beq
|\{x\in[0,1)\colon|P_{q_k}(x,E)|<e^{q_k(\gamma(E)-\delta)}\}|<
e^{-C\delta q_k},
\eeq
where $c_1$, $c_2$ may also depend on $\gamma_-,\gamma_+,\er,\lambda$, 
but can be chosen uniformly in $E$ on any compact interval. In addition, the set in the left hand side can be covered by at most $q_k$ intervals of size $e^{-C\delta q_k}$.
\end{theorem}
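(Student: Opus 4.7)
The plan is to combine a uniform pointwise upper bound on $|P_{q_k}(x,E)|$ with a matching integral lower bound, and then upgrade the resulting soft estimate to an exponential one via the structure of $P_{q_k}(\cdot, E)$ near its zeros. Set $\phi_k(x) := q_k^{-1}\ln|P_{q_k}(x,E)|$. Theorem \ref{upperbound} immediately gives the uniform upper bound $\phi_k(x) \le \gamma(E) + \kappa$ for $q_k \ge N(\kappa)$, with uniformity in $E$ on any compact set where $\gamma$ is continuous (available by Corollary \ref{lyapunov_cor}). For the integral, the factorization $P_{q_k}(x,E) = \prod_l(\mu_l(x)-E)$ gives
$$
\int_0^1 \phi_k(x)\,dx = \int_\R \ln|u-E|\,dF_k(u),\qquad F_k(u) := \frac{1}{q_k}\int_0^1 N_{q_k}(x,u)\,dx.
$$
The distribution function $F_k$ is Lipschitz with a constant independent of $k$ by Corollary \ref{counting_th} and converges uniformly to the Lipschitz IDS $N$. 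Integration by parts reduces the Stieltjes integral to a principal-value Hilbert-transform-type expression in which the logarithmic singularity at $u=E$ is tamed by the Lipschitz bound on $N$; combining with Thouless' formula \eqref{thouless} yields $\int_0^1 \phi_k(x)\,dx = \gamma(E)+o(1)$.

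Applying Chebyshev's inequality to the nonnegative function $(\gamma(E)+\kappa)-\phi_k$, whose integral is $\kappa+o(1)$, gives the soft bound $|\{\phi_k<\gamma-\delta\}|\le (\kappa+o(1))/(\delta+\kappa)$, which tends to zero but is not exponential. To upgrade, I would analyze the zeros $\{x_l\}_{l=1}^M$ of $P_{q_k}(\cdot,E)$. On each of the $q_k$ monotonicity pieces $[\beta_j,\beta_{j+1})$, Lipschitz monotonicity and the eigenvalue repulsion of Theorem \ref{kgaps_th} limit the number of zeros to $O_{\er,\gamma_\pm}(1)$, so $M\le C(\er,\gamma_\pm)q_k$. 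Near a zero $x_l$ where $\mu_{l_0}(x_l)=E$, the lower slope $\mu_{l_0}'\ge \lambda\gamma_-$, combined with slow variation of the remaining factors (controlled by Theorem \ref{kgaps_th}), yields the local bound
$$
|P_{q_k}(x,E)|\ge \tfrac{1}{2}\lambda\gamma_-\,|x-x_l|\,|A_l|,\qquad |A_l|:=\prod_{j\ne l_0}|\mu_j(x_l)-E|,
$$
for $|x-x_l|\le c/q_k$ within the monotonicity piece. Provided $|A_l|\ge e^{q_k(\gamma(E)-\delta/4)}$ holds for every zero, the neighborhood of $x_l$ contributing to the bad set has length at most $e^{-C\delta q_k}$, and summing over the at most $Cq_k$ zeros gives both the exponential measure estimate and the coverage by $q_k$ intervals of size $e^{-C\delta q_k}$.

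The main obstacle is the uniform lower bound on $|A_l|$: the generic repulsion estimate $|A_l|\ge \prod_{j\ne l_0}(c|j-l_0|/q_k)$ only gives the exponentially weak bound $(c/e)^{q_k}$, which coincides with the Lyapunov lower bound from Corollary \ref{lyapunov_cor} but is insufficient when $\gamma(E)$ exceeds this lower bound. My strategy here is to perturb the energy: for $\eta$ of order $q_k^{-1}$ chosen to avoid the neighboring eigenvalues $\mu_j(x_l)$ of $x_l$, we have $|P_{q_k}(x_l,E+\eta)|=|\eta|\,|A_l(\eta)|$ with $A_l(\eta)\approx A_l$, so a lower bound on $|P_{q_k}(x_l,E+\eta)|$ translates into one for $|A_l|$. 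To secure such a lower bound for each specific zero $x_l$, I would apply the soft Chebyshev estimate uniformly in $E'=E+\eta$ over a small range (using the uniformity in Theorem \ref{upperbound} and the almost-Lipschitz continuity of $\gamma$), and invoke a Fubini argument to find, for every $x_l$, an $\eta$ with $x_l\notin B_{\delta/4}(E+\eta)$. Promoting the a.e.-in-$x$ conclusion to a pointwise statement at the distinguished (measure-zero) set of zeros $\{x_l\}$ is the essential technical novelty, and I expect this to be the main difficulty.
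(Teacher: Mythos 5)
You have correctly identified the crux of the matter — you need a lower bound on $|A_l|=\prod_{j\ne l_0}|\mu_j(x_l)-E|$ that is close to $e^{q_k\gamma(E)}$, and the crude repulsion bound $|A_l|\gtrsim\prod_j (cj/q_k)\sim (c/e)^{q_k}$ is far too weak once $\gamma(E)$ exceeds $\ln(c/e)$. However, the fix you sketch (perturb $E$ by $\eta\sim q_k^{-1}$, apply a soft Chebyshev bound uniformly in $E'$, and use a Fubini argument to ``promote the a.e.-in-$x$ conclusion to a pointwise statement at the distinguished zeros $\{x_l\}$'') does not close the gap. The Chebyshev estimate only tells you the bad set in $x$ is small; it provides no leverage to conclude that the \emph{specific} points $x_l$ avoid it, and Fubini cannot turn an a.e.-$x$ statement into a pointwise one at a prescribed measure-zero set. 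This promotion step is exactly as hard as the theorem you are trying to prove, so as written the argument is circular.

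The paper's route avoids the need for any absolute lower bound at the zeros $x_l$. Split $P_{q_k}(x,E)=P^+_{q_k}P^0_{q_k}P^-_{q_k}$ into factors from eigenvalues above, near, and below $E$ (using Theorem \ref{kgaps_th} and Corollary \ref{counting_th}, the middle cluster has $O(1)$ eigenvalues). The key lemma — which is what your plan is missing — is a \emph{rigidity} statement: the non-resonant part $P^\pm_{q_k}(\cdot,E)$ is almost constant, in the sense that
$\bigl|\ln|P^\pm_{q_k}(x,E)|-\ln|P^\pm_{q_k}(y,E)|\bigr|\le C\ln q_k$ for \emph{all} $x,y$. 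This follows by combining Lemma \ref{almostinvariant} (almost invariance of $\widetilde H_{q_k}$ under $x\mapsto x-\alpha$) with the linear repulsion $|\nu^\pm_j-E|\ge C_1 j/q_k$ and the elementary Lemma \ref{logthing}. Because $P^\pm_{q_k}$ changes only by a factor $q_k^C$ across the whole circle, once you know the $x$-average of $\frac1{q_k}\ln|P_{q_k}|$ is $\gamma(E)+o(1)$ — which the paper obtains from Theorem \ref{upperbound}, the definition \eqref{gammadef}, and the recurrences \eqref{prel1}--\eqref{prel2}, though your Thouless/IDS argument likely also works — you get the needed pointwise estimate $|P^\pm_{q_k}(x,E)|\ge e^{q_k(\gamma(E)-o(1))}$ at every $x$, not merely a.e. Then $|P_{q_k}(x,E)|$ small forces $|P^0_{q_k}(x,E)|$ small, and since $P^0$ has only $O(1)$ factors, some $|\mu_l(x)-E|\le e^{-cq_k\delta}$, so $x$ is exponentially close to a zero of $P_{q_k}$ or to some jump point $\beta_l$; Lemma \ref{qkzeros} bounds this combined set by $q_k$ points. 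Note also that your covering by intervals around the zeros alone would miss the latter possibility.

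In short: the rigidity of $P^\pm_{q_k}$ as a function of $x$ (inherited from almost invariance plus eigenvalue repulsion) is the essential mechanism, and it replaces both your hoped-for lower bound on $|A_l|$ and your energy-perturbation/Fubini detour. Without this ingredient your proof does not go through.
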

\noindent We need several preparatory lemmas.
\begin{lemma}
\label{qkzeros}
Under the assumptions of Theorem $\ref{ldt}$, the number of zeros of $P_{q_k}(x,E)$ $($counted with multiplicities$)$ is equal to the number of points from $\{\beta_0,\ldots,\beta_{q_k-1}\}$ such that $N_{q_k}(\beta_l-0,E)<N_{q_k}(\beta_l,E)$. The same holds for $\wt P_{q_k}$, $\wt N_{q_k}$.
\end{lemma}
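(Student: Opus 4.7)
The plan is to establish, for both the Dirichlet restriction $P_{q_k}$ and the periodic restriction $\wt P_{q_k}$, two complementary facts: (i) the total number of zeros on one period, counted with multiplicity, equals the sum $\sum_{l=0}^{q_k-1}[N_{q_k}(\beta_l,E)-N_{q_k}(\beta_l-0,E)]$, and (ii) each individual summand lies in $\{0,1\}$. Taken together, these identify the zero count with the number of indices $l$ at which $N_{q_k}$ strictly jumps, which is exactly the statement of the lemma.

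For (i), on each half-open interval $[\beta_l,\beta_{l+1})$ the operator $H_{q_k}(x)$ depends continuously on $x$ and its diagonal is strictly increasing in $x$ by \eqref{antilip}. First-order perturbation theory then forces the eigenvalues $\mu_j(x)$ of $H_{q_k}(x)$ to be continuous and strictly increasing on $[\beta_l,\beta_{l+1})$, so $P_{q_k}(x,E)=\prod_j(\mu_j(x)-E)$ vanishes at some $x$ in this interval iff some $\mu_j(x)=E$; strict monotonicity makes this equivalent to $\mu_j(\beta_l)\le E<\mu_j(\beta_{l+1}-0)$. Since the Dirichlet Jacobi spectrum is always simple, each such zero is simple, so the number of zeros on $[\beta_l,\beta_{l+1})$ equals $N_{q_k}(\beta_l,E)-N_{q_k}(\beta_{l+1}-0,E)$. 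Summing over $l$ and using $1$-periodicity of $v$, which gives $N_{q_k}(\beta_{q_k}-0,E)=N_{q_k}(\beta_0-0,E)$, telescopes into the form claimed in (i).

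For (ii), I would compute the jump operator at $\beta_l$ explicitly: since $v$ is continuous on $[0,1)$ with $v(0)=0$, $v(1-0)=1$ and is $1$-periodic, only the diagonal entry at position $l$ of $H_{q_k}$ is discontinuous at $x=\beta_l$, dropping by exactly $\lambda$, so $H_{q_k}(\beta_l)-H_{q_k}(\beta_l-0)=-\lambda\, e_l e_l^\top$ is a rank-one negative perturbation. Cauchy interlacing then yields $\mu_j(\beta_l)\le\mu_j(\beta_l-0)\le\mu_{j+1}(\beta_l)$ for every $j$, which immediately forces the jump of $N_{q_k}(\cdot,E)$ at $\beta_l$ to lie in $\{0,1\}$: any jump of two or more would produce an index $j$ with $\mu_j(\beta_l-0)>E$ and $\mu_{j+1}(\beta_l)\le E$, in direct contradiction with $\mu_j(\beta_l-0)\le\mu_{j+1}(\beta_l)$. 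For $\wt P_{q_k}$, $\wt N_{q_k}$ the same rank-one operator describes the jump of $\wt H_{q_k}$ at $\beta_l$, so both interlacing and the piecewise counting argument transfer without change. The step I expect to require most care is handling this periodic case, where $\wt H_{q_k}$ can carry multiplicity-two eigenvalues; however, Cauchy interlacing is insensitive to multiplicities, and a doubly-degenerate zero of $\wt P_{q_k}$ is counted with multiplicity two on both sides of the identity through the factored form $\wt P_{q_k}(x,E)=\prod_j(\mut_j(x)-E)$, so the proof goes through unchanged.
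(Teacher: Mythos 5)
Your proof is correct and follows essentially the same approach as the paper: as $x$ traverses one period, $N_{q_k}(\cdot,E)$ decreases at each zero of $P_{q_k}(\cdot,E)$ and increases by at most one at each $\beta_l$ (the rank-one negative-perturbation interlacing already recorded in \eqref{frank}--\eqref{frank2}), and periodicity of $N_{q_k}(\cdot,E)$ forces the totals to balance. Your write-up spells out the per-interval telescoping and the multiplicity bookkeeping for $\wt P_{q_k}$ more explicitly, but the mechanism is the same.
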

\begin{proof}
Consider the function $N_{q_k}(x,E)$ (or $\wt N_{q_k}$) as $x$ goes from 0 to 1. It decreases by $1$ at each zero 
of $P_{q_k}(x,E)$ (or $\wt P$, respectively) and can increase at most by $1$ at each point $\beta_l$. Since $N_{q_k}(x,E)=N_{q_k}(x+1,E)$, the statement follows.
\end{proof}
\begin{lemma}
\label{goodlemma}
Suppose that $A_1$, $A_2$ are two finite subsets of $[m,M]$ of the same cardinality, 
$m>0$, and that $f$ is a nondecreasing function on $[m,M]$.
Assume that the difference of counting functions of $A_1$ and $A_2$ is bounded by $N$. Then
$$
\l|\sum_{a\in A_1}f(a)-\sum_{a\in A_2}f(a)\r|\le 2N\max\{|f(m)|, |f(M)|\}.
$$
\end{lemma}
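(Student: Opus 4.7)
The plan is to recast both sums as Stieltjes integrals against the counting measures and reduce the difference to an integral against the difference of counting functions, which is uniformly bounded by $N$ by hypothesis. Concretely, set $F_i(t) = \#(A_i \cap (-\infty,t])$ for $i=1,2$. Then
$$
\sum_{a\in A_i} f(a) = \int_{m^-}^M f(t)\, dF_i(t),
$$
and the hypothesis reads $|F_1(t) - F_2(t)| \le N$ for all $t \in \R$.

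Next, I would apply integration by parts (Abel summation in the discrete formulation) to the difference:
$$
\sum_{a\in A_1} f(a) - \sum_{a\in A_2} f(a) = \int_{m^-}^M f(t)\, d(F_1-F_2)(t) = \bigl[f(t)(F_1-F_2)(t)\bigr]_{m^-}^M - \int_{m^-}^M (F_1-F_2)(t)\, df(t).
$$
The boundary term vanishes: at $t=M$ both counting functions equal the common cardinality $|A_1|=|A_2|$, and at $t=m^-$ both vanish since $A_1,A_2\subset[m,M]$. Therefore
$$
\left|\sum_{a\in A_1} f(a) - \sum_{a\in A_2} f(a)\right| = \left|\int_{m^-}^M (F_1-F_2)(t)\, df(t)\right|.
$$

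Since $f$ is nondecreasing, $df$ is a nonnegative measure, so I can bound the right-hand side by
$$
N \int_{m^-}^M df(t) = N\bigl(f(M)-f(m^-)\bigr) \le N\bigl(|f(M)| + |f(m)|\bigr) \le 2N\max\{|f(m)|,|f(M)|\}.
$$
This yields the claimed estimate. There is no real obstacle; the only subtle point is the correct handling of one-sided limits at the endpoints so that both boundary terms vanish, which is why the assumption $A_1,A_2 \subset [m,M]$ together with equal cardinality is exactly what is needed. The hypothesis $m>0$ is not used in this estimate itself, but would become relevant in the intended application where $f = \ln$ is taken on $[m,M]$ with $m>0$ so that $|f(m)|$ is finite.
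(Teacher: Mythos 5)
Your proof is correct, and it is cleaner and more careful than the one-line argument the paper gives. The paper's proof reads in its entirety: ``Obviously, the worst case is when $(A_1\setminus A_2)\cup (A_2\setminus A_1)$ contains $2N$ points.'' Taken literally this is not a valid reduction: the counting-function condition $|F_1-F_2|\le N$ is perfectly compatible with $A_1\triangle A_2$ being much larger than $2N$ (e.g.\ two interlacing sets of arbitrary size have $|F_1-F_2|\le 1$), so one cannot simply replace $A_1,A_2$ by sets differing in only $2N$ elements. What the heuristic is really pointing at is exactly the telescoping you carry out explicitly: after Abel summation the paired contributions cancel, the boundary terms vanish because the sets share a cardinality and lie in $[m,M]$, and the surviving quantity is an integral of the bounded difference $F_1-F_2$ against the nonnegative measure $df$, giving $N\,(f(M)-f(m))\le 2N\max\{|f(m)|,|f(M)|\}$. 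Your Stieltjes/integration-by-parts formulation is the natural rigorous version of this; it also makes transparent exactly where the monotonicity of $f$ enters, which the paper's ``obviously'' suppresses. Your closing remark about $m>0$ is also accurate: it is not used in the lemma itself but guarantees $f(m)$ is finite in the application to $f=\ln$.
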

\noindent (Note that the values of $f$ may be negative, hence we need to take both $m$ and $M$ into account.)
\begin{proof}
Obviously, the worst case is when $(A_1\setminus A_2)\cup (A_2\setminus A_1)$ contains $2N$ points.
\end{proof}
\begin{lemma}
\label{logthing}
Let $a,b>0$. Then
$$
\sum_{j=1}^n(\ln(aj+b)-\ln (aj))\le \frac{b}{a}\ln (n+1).
$$
\end{lemma}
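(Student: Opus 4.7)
My plan is to exponentiate the inequality and reduce it to Bernoulli's inequality via a one-line telescoping identity. Setting $c=b/a>0$, the target is equivalent to
\[
\prod_{j=1}^{n}\l(1+\frac{c}{j}\r)\le (n+1)^{c}.
\]
The first step is the telescoping identity $\prod_{j=1}^{n}(1+1/j)=\prod_{j=1}^{n}(j+1)/j=n+1$, which rewrites the right-hand side as $\prod_{j=1}^{n}(1+1/j)^{c}$. It then suffices to establish the termwise comparison $1+c/j\le(1+1/j)^{c}$, which is Bernoulli's inequality $(1+x)^{c}\ge 1+cx$ applied at $x=1/j$. Multiplying the termwise inequalities and taking logarithms closes the argument.

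The one sensitive point --- and the step I would flag as the potential obstacle --- is the direction of Bernoulli's inequality: the termwise comparison is favorable precisely when $c\ge 1$, i.e.\ $b\ge a$, which I expect to be the regime in which the lemma is applied in the sequel (the previous Lemma \ref{logthing}-style bookkeeping typically enters through estimates in which $b$ dominates $a$). Outside that range the termwise comparison reverses, and the best one-line bound from $\ln(1+x)\le x$ is $\sum_{j=1}^{n}\ln(1+c/j)\le c\sum_{j=1}^{n}1/j\le c(1+\ln n)$, which differs from the stated right-hand side only by a uniformly bounded additive factor. I therefore expect the actual proof to consist essentially of the two-step telescoping/Bernoulli argument above, with no further analytic input.
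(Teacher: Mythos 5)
Your route is genuinely different from the paper's, and it is actually more careful. The paper does not use Bernoulli or telescoping: it applies $1+x\le e^x$ termwise and then asserts $\prod_{j=1}^{n}\exp\left(\frac{b}{aj}\right)\le \exp\left\{\frac{b}{a}\ln(n+1)\right\}$, which amounts to the inequality $\sum_{j=1}^{n}\frac{1}{j}\le\ln(n+1)$. This is backwards: the integral comparison gives $\sum_{j=1}^{n}\frac{1}{j}\ge\int_{1}^{n+1}\frac{dx}{x}=\ln(n+1)$, with strict inequality for every $n\ge 1$. So the paper's one-line argument does not actually close.

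The sensitivity you flag at $c=b/a<1$ is not just a limitation of your method; the lemma as stated fails there. Already for $n=1$ the claim reads $\ln(1+c)\le c\ln 2$, and for small $c>0$ the left side is $\sim c$ while the right side is $\sim c\ln 2<c$. Your telescoping-plus-Bernoulli argument is correct and proves the lemma for $c\ge 1$ (Bernoulli gives $1+c/j\le(1+1/j)^c$, and $\prod_{j=1}^n(1+1/j)=n+1$ makes the bound exact at the borderline $c=1$); for $c<1$ no bound with coefficient $c\ln(n+1)$ can hold. The harmless repair is exactly the one you sketch: $\sum_{j=1}^n\ln(1+c/j)\le c\sum_{j=1}^n\frac{1}{j}\le c(1+\ln n)$, valid for all $c>0$, which still yields the $O(\ln q_k)$ conclusion needed in the proof of Theorem \ref{ldt}. (There $a$ is of order $1/q_k$ and $b$ of order $1/q_{k+1}$, so $c$ can indeed be small; your guess that the application lives in the regime $b\ge a$ is not borne out, but your fallback bound covers it.) In short, your proposal correctly identifies and circumvents a genuine defect both in the statement of Lemma \ref{logthing} and in the paper's proof of it.
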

\begin{proof}
The left hand side is
$$
\sum_{j=1}^n \ln\l(1+\frac{b}{aj}\r)=\ln\prod_{j=1}^n \l(1+\frac{b}{aj}\r)\le \ln \prod_{j=1}^n 
\exp\l(\frac{b}{aj}\r)\le \ln \exp\l\{ \frac{b}{a}\ln (n+1)\r\}\,\qedhere
$$
\end{proof}
\vskip 1mm
{\noindent \bf Proof of Theorem \ref{ldt}.} We study the behavior of the function 
$$
\frac{1}{q_k}\ln|P_{q_k}(x,E)|=\frac{1}{q_k}\sum_{j=0}^{q_k-1}\ln|\mu_j(x)-E|.
$$
In the sequel, all constants are allowed to depend on $\er,\gamma_-,\gamma_+$. 
From Theorem \ref{kgaps_th}, Corollary \ref{counting_th}, and the fact that 
$|N_{q_k}(x,E)-\wt N_{q_k}(x,E)|\le 2$, one can split the eigenvalues $\mu_j(x)$ on each interval into three clusters: above $E$, around $E$, and below $E$, such that
\begin{enumerate}
	\item The eigenvalues $\nu^+_j(x)$ in the cluster above $E$, taken in the {\it increasing} order as $j=1,2,\ldots$, admit a lower bound $\nu_j^+(x)\ge E+j\frac{C_1}{q_k}$.
	\item The eigenvalues $\nu^-_j(x)$ in the cluster below $E$, 
	taken in the {\it decreasing} order as $j=1,2,\ldots$, admit an upper bound $\nu_j^-(x)\le E-j\frac{C_1}{q_k}$.
	\item There are $C_2$ eigenvalues in the remaining cluster around $E$.
	\item The same holds for the eigenvalues of the periodic restriction. We will 
	denote them by $\tilde \nu_j^{\pm}(x)$.
\end{enumerate}
Let us also decompose $P_{q_k}$, $\wt P_{q_k}$ in the same way.
\beq
\label{ppm0}P_{q_k}(x,E)=P_{q_k}^+(x,E)P_{q_k}^0(x,E)P_{q_k}^-(x,E),\quad \wt P_{q_k}(x,E)=\wt P_{q_k}^+(x,E)\wt P_{q_k}^0(x,E)\wt P_{q_k}^-(x,E),
\eeq
where the factors are formed by $(\mu_j(x)-E)$ from the respective clusters. 
We now claim that, for any $x,y\in [0,1)$, we have
\beq
\label{logclose}
\l|\ln|P^{\pm}_{q_k}(x,E)|-\ln|P^{\pm}_{q_k}(y,E)|\r|\le C\ln q_k,\quad 
\l|\ln|\wt P^{\pm}_{q_k}(x,E)|-\ln|\wt P^{\pm}_{q_k}(y,E)|\r|\le C\ln q_k.
\eeq
We start from proving \eqref{logclose} for $x=\beta_l$, $y=\beta_m$. 
Due to Lemma \ref{almostinvariant}, we have $|\tilde{\nu}_j^{\pm}(\beta_l)-\tilde{\nu}_j^{\pm}(\beta_m)|\le \frac{\lambda\gamma_+}{q_{k+1}}$, from which it follows that 
$$
\l|\ln\l|\wt P^{\pm}_{q_k}(\beta_l,E)\r|-\ln\l|\wt P^{\pm}_{q_k}(\beta_m,E)\r|\r|\le \sum_{j=1}^{q_k}\l\{\ln\l(\frac{j C_1}{q_k}+\frac{\lambda\gamma_+}{q_{k+1}}\r)-
\ln \frac{j C_1}{q_k}\r\}\le C\ln q_k.
$$
by Lemma \ref{logthing}.
Let us now consider the case $[x,y)\in [\beta_l,\beta_{l+1})$. Due to monotonicity, we can 
assume that $x=\beta_l$, $y=\beta_{l+1}-0$, and then the statement also follows from
$$
\l|\ln\l|\wt P^{\pm}_{q_k}(\beta_l,E)\r|-\ln\l|\wt P^{\pm}_{q_k}(\beta_{l+1}-0,E)\r|\r|\le
\sum_{j=1}^{q_k}\l\{\ln\l(\frac{j C_1}{q_k}+\frac{\lambda\gamma_+}{(1-\er)q_{k}}\r)-
\ln\frac{j C_1}{q_k}\r\}\le C\ln q_k
$$
by Lemma \ref{logthing}. The claim also holds for $P^{\pm}$ by Lemma \ref{goodlemma}.

The above computations imply that there exists $\gamma_{q_k}(E)$ such that
\beq
\label{gammaqk1}
\gamma_{q_k}(E)\le \frac{1}{q_k}\ln|P_{q_k}^-(x,E)P_{q_k}^+(x,E)|\le \gamma_{q_k}(E)+C\frac{\ln q_k}{q_k},
\eeq
\beq
\label{gammaqk2}
\gamma_{q_k}(E)\le \frac{1}{q_k}\ln|\wt P_{q_k}^-(x,E)\wt P_{q_k}^+(x,E)|\le \gamma_{q_k}(E)+C\frac{\ln q_k}{q_k}.
\eeq
This means that, if $|P_{q_k}(x,E)|<e^{q_k(\gamma_{q_k}(E)-\delta)}$, we must have 
$|P_0(x,E)|\le e^{-q_k \delta}$, and hence for some $l$ we have $|\mu_l(x)-E|\le e^{-C q_k \delta}$, which implies that $x$ is either exponentially close to a zero of $P_{q_k}(\cdot,E)$, or to $\beta_l$ for some $l$. In the second case, if $x$ is not close to an ``actual'' 
root of $P_{q_k}$, then $N_{q_k}(\beta_l-0,E)=N_{q_k}(\beta_l,E)$. Hence, by Lemma \ref{qkzeros}, we can add all these $\beta_l$ to the set of roots 
and still get a set with at most $q_k$ points to which $x$ must be exponentially close.
We thus have verified the statement of the theorem, but for $\gamma(E)$ replaced by $\gamma_{q_k}(E)$.

We now claim that $\gamma_{q_k}(E)=\gamma(E)+o(1)$ as $q_k\to \infty$, uniformly in $E$. 
Fix $\ka>0$. From Theorem \ref{upperbound}, we get that, for $n>N(\ka)$, $|P_n(x,E)|\le e^{n(\gamma(E)+\ka)}$. From the definition \eqref{gammadef} of $\gamma(E)$, it follows that, for all $k$, we must have $|P_n(x,E)|\ge C e^{n (\gamma(E)-\ka)}$ for $n=q_k,q_k-1$ or $q_k-2$, on a subset of $[0,1]$ of measure at least $1/4$. If $n=q_k-1$, then \eqref{prel2}
implies that it should hold for $P_{q_k}$ or $P_{q_k-2}$ on a set of sufficiently large measure (bounded from below by positive universal constant). Finally, if it holds for $P_{q_k-2}$, then \eqref{prel1} implies the similar statement for $P_{q_k}$ or $\wt P_{q_k}$, and the case $\wt P_{q_k}$ implies the case of $P_{q_k}$ because of \eqref{gammaqk1}, \eqref{gammaqk2}; thus, Theorem \ref{ldt} follows.\,\,\qed

\section{Localization}
\label{loc_sect}
A {\it generalized eigenfunction} of $H$ is, by definition, 
a polynomially bounded solution of the equation
$H\psi=E\psi$. The corresponding $E$ is called a
{\it generalized eigenvalue}. We first prove
\begin{theorem}\label{7}
\label{main}
Suppose that $\alpha$ is Diophantine, $E$ is a generalized eigenvalue of $H_{\alpha,\lambda}(x)$, and that $\gamma(E)>0$. Then the corresponding generalized eigenfunction belongs to $l^2(\Z)$.
\end{theorem}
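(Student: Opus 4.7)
The plan is to follow the non-perturbative Anderson localization scheme of \cite{Lana}, with the large deviation estimate of Theorem \ref{ldt} playing the role of the analytic large deviation bound used there. Fix a generalized eigenvalue $E$ with $\gamma(E)>0$ and a polynomially bounded generalized eigenfunction $\psi$. After a translation of the origin we may assume $\psi(0)\ne 0$, and $|\psi(n)|\le C(1+|n|)^A$ for some fixed $A$. For any interval $I=[m_1,m_2]$ with $n\in I$, the standard block-resolvent identity reads
\[
\psi(n)=-G_I(n,m_1)\,\psi(m_1-1)-G_I(n,m_2)\,\psi(m_2+1),
\]
where Cramer's rule expresses the Green's function matrix elements as ratios of Dirichlet determinants; for instance
\[
|G_I(n,m_1)|=\frac{|P_{m_2-n}(x+(n+1)\alpha,E)|}{|P_{|I|}(x+m_1\alpha,E)|},
\]
and symmetrically for $G_I(n,m_2)$.

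I will say that $n$ is $(q_k,\delta)$-\emph{regular} if there is an interval $I\ni n$ of length $q_k$, with $n$ at distance at least $q_k/4$ from each endpoint, such that $|P_{q_k}(x+m_1\alpha,E)|\ge e^{q_k(\gamma(E)-\delta)}$. Theorem \ref{upperbound} bounds each numerator from above by $e^{(q_k-|n-m_i|)(\gamma(E)+\delta)}$, so for $(q_k,\delta)$-regular $n$ one obtains
\[
|G_I(n,m_i)|\le e^{-(\gamma(E)-3\delta)|n-m_i|},\qquad i=1,2.
\]
Inserting this into the block-resolvent identity together with the polynomial bound on $\psi$ yields $|\psi(n)|\le e^{-(\gamma(E)-4\delta)\,q_k/4}$ whenever $n$ is $(q_k,\delta)$-regular at a scale $q_k$ with $C_1\log|n|\le q_k\le |n|/C_2$; letting $q_k$ grow with $|n|$ then produces true exponential decay of $\psi(n)$, which is more than enough for $l^2$-membership.

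The main obstacle, and the place where the Diophantine hypothesis enters, is to verify that every sufficiently large $n$ is $(q_k,\delta)$-regular for some such $q_k$ with an interval $I$ that moreover avoids the origin, so that the non-small value $\psi(0)$ does not appear at an endpoint and ruin the bound. Fix $q_k\in Q(\alpha,\er)$ with $C_1\log|n|\le q_k\le |n|/C_2$. Theorem \ref{ldt} says that the set of phases $y\in[0,1)$ where $|P_{q_k}(x+y\alpha,E)|<e^{q_k(\gamma(E)-\delta)}$ is contained in at most $q_k$ intervals of length $e^{-cq_k\delta}$. If two distinct integers $j_1,j_2\in[n-2q_k,n+2q_k]$ both produced resonant left endpoints, we would have $\|(j_1-j_2)\alpha\|\le 2e^{-cq_k\delta}$; but the Diophantine bound $\|m\alpha\|\ge C|m|^{-\tau}$, which also forces $q_{k+1}\le Cq_k^{\tau}$, rules this out once $q_k$ is large enough, since $(4q_k)^{-\tau}\gg e^{-cq_k\delta}$. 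Hence at most one resonant left endpoint lies in the window, and because $|n|\gg q_k$ we can slide $I$ along the axis to avoid both that single resonance and the origin, producing a valid regular interval. This double-resonance elimination, standard in the Jitomirskaya scheme, is precisely the step that would fail for Liouville $\alpha$; combining the resulting exponential decay on the cofinite set of large $n$ with the a priori polynomial bound on $\psi$ gives $\psi\in l^2(\Z)$.
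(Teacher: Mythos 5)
Your high-level framework (Cramer's rule for the Green's function, Theorem~\ref{upperbound} for the numerators, Theorem~\ref{ldt} to control the denominator, Diophantine condition to separate resonances) matches the paper's, so the overall plan is the right one. But the crucial counting step is wrong.

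Theorem~\ref{ldt} says the exceptional set $\{y: |P_{q_k}(y,E)|<e^{q_k(\gamma(E)-\delta)}\}$ is covered by at most $q_k$ intervals of length $e^{-C\delta q_k}$ --- not by a single such interval. Two resonant integers $j_1,j_2$ give $\{x+j_1\alpha\}$ and $\{x+j_2\alpha\}$ both in the exceptional set, but they may fall into \emph{different} bad intervals, so you cannot conclude $\|(j_1-j_2)\alpha\|\le 2e^{-c\delta q_k}$. Consequently your claim that at most one resonant left endpoint lies in the window $[n-2q_k,n+2q_k]$ does not follow, and the ``slide $I$ to avoid the single resonance'' step is unjustified: singularity of $n$ actually \emph{forces} of order $q_k/2$ resonant left endpoints near $n$, and this is not yet a contradiction because $q_k/2<q_k$.

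The paper closes this gap differently (Theorem~\ref{multiscale}). The mechanism is a double-resonance argument: the origin is $(\gamma(E)-\delta,q_k)$-singular because $\psi(0)\ne 0$ (this is the true role of $\psi(0)\ne 0$, not merely avoiding a large boundary value in~\eqref{expansion}), and singularity forces $\approx (q_k+1)/2$ resonant left endpoints near $0$. If some $m$ with $|m|>(q_k+1)/2$ were also singular, one would get another $\approx (q_k+1)/2$ resonant left endpoints near $m$, disjoint from the first batch, for a total of $q_k+1$ distinct points in the $q_k$ bad intervals. \emph{Now} pigeonhole applies: two of the $q_k+1$ points land in the same bad interval, yielding $\|r'\alpha\|\le e^{-Cq_k}$ for some $r'\le|m|$, and the Diophantine bound forces $|m|>e^{Cq_k/\tau}$. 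It is this two-point singularity count --- not a single-point resonance count in a window --- that produces enough resonant phases to overfill the $q_k$ bad intervals. To repair your argument you would need to reintroduce a second singular point (the origin) and accumulate the $q_k+1$ resonances as the paper does; the rest of your outline (regularity implies Green's function decay, Diophantine $\Rightarrow q_{k+1}\le q_k^{C}$ to cover all large $n$, and then~\eqref{expansion}) is sound.
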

From now on, let us drop the dependence on $\alpha$ and $\lambda$ from all the 
notation, assuming that they are fixed. By $G_{[a,b]}(x;m,n)$ we denote the 
$(m,n)$-matrix element of $\l(\l.(H(x)-E)\r|_{[a,b]}\r)^{-1}$ with 
Dirichlet boundary conditions. Note that
$$
G_{[a,b]}(x+k\alpha;m,n)=G_{[a+k,b+k]}(x;m+k,n+k),
$$
so it is sufficient to consider the intervals $[0,n]$. 

Assume that $x$ is fixed. Following \cite{Lana}, let 
us call a point $m\in \Z$ {\it $(\mu,q)$-regular} if there exists an interval $[n_1,n_2]$ such that $n_2=n_1+q-1$, $m\in[n_1,n_2]$, $|m-n_i|\ge q/5$ for $i=1,2$, and
$$
|G_{[n_1,n_2]}(x;m,n_i)|<e^{-\mu |m-n_i|}.
$$
Otherwise, $m$ is called {\it $(\mu,q)$-singular}. Any formal solution $H(x)\psi=E\psi$ 
can be reconstructed from its values at two points,
\beq
\label{expansion}
\psi(m)=-G_{[n_1,n_2]}(x;m,n_1)\psi(n_1-1)-G_{[n_1,n_2]}(x;m,n_2)\psi(n_2+1),\quad m\in [n_1,n_2].
\eeq
If $\mu$ is fixed, then any point $m$ such that $\psi(m)\neq 0$ is $(\mu,q)$-singular for sufficiently large $q$.

\begin{theorem}
\label{multiscale}
Under the assumptions of Theorem {\rm\ref{main}}, let  $\er(\alpha)<\er<1$. 
For any $0<\delta<\gamma(E)$ there exists $q_0$ such that if $q_k>q_0$, $q_k\in Q(\alpha,\er)$, and
$n,m$ are both $(\gamma(E)-\delta,q_k)$-singular with $|m-n|>\frac{q_k+1}{2}$, then $|m-n|>e^{C(\alpha,\er)q_k}$.
\end{theorem}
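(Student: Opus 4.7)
The plan is to run the now-classical Bourgain--Goldstein type argument: convert singularity at a point into exponential smallness of $P_{q_k}$ at a translate of $x$ (via Cramer's rule and the upper bound Theorem \ref{upperbound}), then combine the resulting membership in the large-deviation exceptional set of Theorem \ref{ldt} with the Diophantine property of $\alpha$, via pigeonhole, to force $|m-n|$ to be exponentially large.

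First I would fix $p\in\{m,n\}$ and any admissible interval $[n_1,n_2]$ with $n_2=n_1+q_k-1$, $p\in[n_1,n_2]$, and $\min(p-n_1,n_2-p)\ge q_k/5$. The $(\gamma(E)-\delta,q_k)$-singularity of $p$ implies that at least one of $|G_{[n_1,n_2]}(x;p,n_i)|\ge e^{-(\gamma(E)-\delta)|p-n_i|}$ holds. Writing the two Green's function entries via Cramer's rule as ratios whose numerators are Dirichlet determinants $P_j$ and whose common denominator is $P_{q_k}(x+n_1\alpha,E)$, and bounding the numerators by Theorem \ref{upperbound} (applied with some $\kappa\ll\delta$), I deduce
\[
|P_{q_k}(x+n_1\alpha,E)|\le e^{q_k(\gamma(E)-c\delta)}
\]
for some $c=c(\delta)>0$. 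By Theorem \ref{ldt} applied with parameter $c\delta$, the point $x+n_1\alpha$ then lies in an exceptional set $B$ covered by at most $q_k$ intervals of length $\le e^{-C'c\delta q_k}$.

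Next, for each singular point $p\in\{m,n\}$, the admissible left endpoint $n_1$ ranges over an integer interval $J_p\subset\Z$ of cardinality $\sim 3q_k/5$. Under the separation hypothesis $|m-n|>(q_k+1)/2$, a direct count shows $|J_m\cup J_n|>q_k$, while every translate $x+j\alpha$, $j\in J_m\cup J_n$, lies in $B$. Pigeonhole then produces distinct $j_1\ne j_2$ in $J_m\cup J_n$ with $x+j_1\alpha$ and $x+j_2\alpha$ in the same covering interval, so
\[
\|(j_1-j_2)\alpha\|\le e^{-C'c\delta q_k}.
\]
The Diophantine bound $\|j\alpha\|\ge c_0|j|^{-\tau}$ forces $|j_1-j_2|\ge e^{C''(\alpha,\er)q_k}$. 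A case check rules out both $j_i$ lying in the same range $J_m$ or $J_n$, since there $|j_1-j_2|\le q_k$ contradicts the previous lower bound for $q_k$ large. Hence $j_1$ and $j_2$ come from different ranges, giving $|j_1-j_2|\le|m-n|+q_k$ and therefore $|m-n|\ge e^{C(\alpha,\er)q_k}$, which is the desired conclusion.

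The main obstacle is the first step: the singularity bound only controls one of the two endpoint Green's function entries, the placement of $p$ in $[n_1,n_2]$ varies from $p-n_1\approx q_k/5$ to $p-n_1\approx 4q_k/5$, and the $\kappa$-slack in Theorem \ref{upperbound} must be taken small enough that the net exponent $-c\delta$ remains strictly negative uniformly in this range. Once this bookkeeping is performed and $c=c(\delta)>0$ is extracted uniformly in admissible $n_1$, the pigeonhole count and the Diophantine extraction are routine and $C(\alpha,\er)$ depends only on $c$, the Diophantine exponent $\tau$, and the covering constant from Theorem \ref{ldt}.
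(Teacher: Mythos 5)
Your proposal is correct and follows essentially the same route as the paper: Cramer's rule plus Theorem \ref{upperbound} turns singularity at a point into exponential smallness of $P_{q_k}$ at a range of translates of $x$, then Theorem \ref{ldt} plus pigeonhole plus the Diophantine bound forces the separation to be exponentially large. The only difference is cosmetic bookkeeping: the paper deliberately selects exactly $[(q_k+1)/2]$ values of the left endpoint around each singular point, chosen so that the hypothesis $|m-n|>(q_k+1)/2$ makes all $q_k+1$ translates automatically distinct (eliminating any case analysis), whereas you take the full admissible range of size $\sim 3q_k/5$ for each point, verify $|J_m\cup J_n|>q_k$, and then rule out the "both in the same range" case by noting $|j_1-j_2|\le 3q_k/5$ would contradict the Diophantine lower bound for $q_k$ large. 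Both bookkeeping choices are fine; the content of the argument is identical.
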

\begin{proof}
The proof follows the scheme from \cite{Lana}. We have the following expressions for Green's function matrix elements if $b=a+q_k-1$, $a\le l\le b$.
\beq
\label{greenleft}
|G_{[a,b]}(x;a,l)|=\l|\frac{P_{b-l}(x+(l+1)\alpha)}{P_{q_k}(x+a\alpha)}\r|,
\eeq
\beq
\label{greenright}
|G_{[a,b]}(x;l,b)|=\l|\frac{P_{l-a}(x+a \alpha)}{P_{q_k}(x+a\alpha)}\r|.
\eeq
Suppose that $m-[3q_k/4]\le l\le m-[3q_k/4]+[(q_k+1)/2]$. Since $m$ is $(\gamma(E)-\delta,q_k)$-singular, we either have 
\beq
\label{greenfunctionlarge}
|G_{[a,b]}(x;a,l)|>e^{-(l-a)(\gamma(E)-\delta)}\quad \text{or}\quad  |G_{[a,b]}(x;l,b)|>e^{-(b-l)(\gamma(E)-\delta)}
\eeq
for all intervals $[a,b]$ such that $|a-l|,|b-l|\ge q_k/5$ and $b=a+q_k-1$. From Theorem \ref{upperbound} and since $q_k\ge q_0$, we can choose a 
sufficiently large $q_0$ (depending only on $\delta$) such that
$$
|P_{b-l}(x+(l+1)\alpha)|\le e^{(b-l)(\gamma(E)+\delta/32)},\quad |P_{l-a}(x+a\alpha)|\le e^{(l-a)(\gamma(E)+\delta/32)}.
$$
In other words, the numerators of \eqref{greenleft}, \eqref{greenright} cannot get very large. Hence, the only possibility for \eqref{greenfunctionlarge} is for one of the denominators 
to become exponentially small. This means that if $m-[3q_k/4]\le a\le m-[3q_k/4]+[(q_k+1)/2]$, we have (without loss of generality, in the case of the first denominator) 
$$
|P_{q_k}(x+a\alpha,E)|\le \frac{e^{(b-l)(\gamma(E)+\delta/32)}}{e^{-(l-a)(\gamma(E)-\delta)}}=e^{\gamma(E)(b-a)+(b-l)\delta/32-(l-a)\delta}\le e^{q_k(\gamma(E)-\delta/16)}.
$$ Suppose that the points $m_1$ and $m_2=m_1+r$
are both $(\gamma(E)-\delta,q_k)$-singular, $r>0$. Let 
$$
x_j=\{x+(m_1-[3q_k/4]+(q_k-1)/2+j)\alpha\},\quad j=0,\ldots, [(q_k+1)/2]-1,
$$
$$
x_j=\{x+(m_2-[3q_k/4]+(q_k-1)/2+j-[(q_k+1)/2])\alpha\},\quad j=[(q_k+1)/2],\ldots, q_k.
$$
If $r>\frac{q_k+1}{2}$, then all these points are distinct, and we have $|P_{q_k}(x_j,E)|\le e^{q(\gamma(E)-\delta/16)}$. From Theorem \ref{ldt}, we get that, for sufficiently large $q_k$, at least 
two of the points should be $e^{-C q_k}$-close to each other, and so we get that $\|r'\alpha\|\le e^{-Cq_k}$ for some $r'\le r$. From Diophantine condition, we get that $r\ge e^{Cq_k/\tau}$. This completes the proof.
\end{proof}

{\noindent \bf Proof of Theorem \ref{7}.} Suppose that $\psi$ is a generalized eigenfunction, so that $|\psi(m)|\le C(1+|m|^p)$. Fix $\er=1/2$, then $Q(\alpha,\er)$ consists of all denominators of $\alpha$.
Without loss of generality, we may assume that $\psi(0)\neq 0$. Fix $0<\delta<\gamma(E)$. 
The point $0$ is $(\gamma(E)-\delta,q_k)$-singular for sufficiently large $q_k\in Q(\alpha,\er)$. Hence, the interval $[q_k,e^{C(\alpha)q_k}]$ contains only $(\gamma(E)-\delta,q_k)$-regular points.

Take $n\in \N$, and find $k$ such that $n\in [q_k,q_{k+1})$. Since $\alpha$ is 
Diophantine, we have $q_{k+1}\le q_k^{C'(\alpha)}$. Together with the previous observation, 
if $n$ is sufficiently large, it is contained in an interval $[q_k,q_k^{C'(\alpha)})$ 
consisting of $(\gamma(E)-\delta,q_k)$-regular points.
Hence there exist
$n_1,n_2$ satisfying $n_1\le n\le n_2$ and $q_k/5 \le |n_2-n_1|\le 4q_k/5$, 
such that
$$
G_{[n_1,n_2]}(x;n,n_i)\le e^{-(\gamma(E)-\delta)|n-n_i|}.
$$
From \eqref{expansion}, we obtain
$$
|\psi(n)|\le C(1+|n|^p)e^{-\frac{\gamma(E)-\delta}{5}q_k}\le C(1+|n|^p)e^{-C_1 n^{1/C'(\alpha)}},
$$
which holds for sufficiently large $n$. The case $n<0$ is similar, and thus $\psi\in l^2(\Z)$.
\section{Proof of Theorem \ref{localization_th}: exponential decay of eigenfunctions}
\label{exp_sect}
 Due to \cite[Chapter VII]{Berez}, the spectral measure of $H_{\alpha,\lambda}(x)$
 is supported on the set of its generalized eigenvalues.
 The zero set of $\gamma(E)$ has Lebesgue and density of states measure zero due to Theorem \ref{density_th} and Corollary \ref{lyapunov_cor}. 
 Hence, for almost 
 every $x$, the set of energies $E$ for which the statement of Theorem
 \ref{7} holds has full spectral measure, implying the pure point spectrum. Thus it remains to prove
 uniform Lyapunov localization for this full measure set of $x.$
If $H_{\alpha,\lambda}(x)\psi=E\psi$, $\psi\in l^2(\Z)$, let $n_0(\psi)$ be the 
leftmost point where $|\psi(n)|$ attains its maximal value (which obviously exists). Then we assume that $\psi(n_0)=1$. Suppose that $\alpha$ is Diophantine and that $\gamma(E)>0$. Our goal is to show that, if $0<\delta<\gamma(E)$, then
$$
|\psi(n)|\le C(\delta)e^{-(\gamma(E)-\delta)|n-n_0(\psi)|},
$$
where the constant $C$ does not depend on $E$ and $x$.

Fix any $\delta<\gamma(E)$. Similarly to the previous section, let us also fix $\er=1/2$, so that $Q(\alpha,1/2)$ contains all denominators of $\alpha$.

Without loss of generality, we can assume that $n_0(\psi)=0$, otherwise we can shift $x$ to $x+n_0(\psi)\alpha$ and get a unitary equivalent operator whose eigenfunctions are translated by $n_0(\psi)$. There exists $q_0(\delta)>0$ such that $0$ is $(\gamma(E)-\delta/2,q_k)$-singular for all denominators $q_k>q_0$, and that the statement of Theorem \ref{multiscale} holds for $q_0$ and $\delta/2$. Note that this choice is uniform in 
$x$ and $E$ as $E$ must belong to $\sigma(H_{\alpha,\lambda}(x))$ which is contained in a uniformly bounded interval.

The rest of the proof follows the method of \cite{Lana}. If $q_k>q_0$, then $[q_k,e^{c(\alpha)q_k}]$ must consist of $(\gamma(E)-\delta/2,q_k)$-regular points. 
Again, from the Diophantine condition, 
the intervals $[q_k,q_k^{C_1(\alpha)}]$ cover all sufficiently large integer points. 
For some $\theta>0$, it also holds for the intervals $[q^{1+\theta}_k,q_k^{C_1(\alpha)}]$, and they also consist of $(\gamma(E)-\delta/2,q_k)$-regular points. Let $n\in [q^{1+\theta}_k,q_k^{C_1(\alpha)}]$. The fact that $n$ is 
$(\gamma(E)-\delta/2,q_k)$-regular implies existence of a certain
interval $[n_1,n_2]$ with Green function's decay from $n$ to the edges of the interval. 
Apply \eqref{expansion} on this interval, thus expanding $\psi(n)$ in terms of $\psi(n_1-1)$ and $\psi(n_2+1)$. The points $n_1-1$ and $n_2+1$ are also regular, and hence we can repeat 
the procedure and get an expansion involving the values of $\psi$ at four points. 
Let us repeat the procedure of finding a suitable interval and expressing each $\psi(n)$ 
using \eqref{expansion} until we get $n_1<q_k$ at some stage, or the depth of the expansion reaches $[5n/q_k]$, whichever comes first. The last condition guarantees that $n_2$ will 
never be singular, as the maximal possible value of $n_2$ on the last 
step does not exceed $2n$, since $n_2\le n+4q_k/5$ on each step.

The result can be written in the following form:
\beq
\label{greenexp}
\psi(n)=\sum_{s\in S}G_{1,s} G_{2,s}\ldots G_{p(s),s}\psi(n_s),
\eeq
where $S\subset\bigcup_{p=0}^{[5n/q_k] +1}\{0,1\}^p$ indicates all possible sequences of choices between $n_1$ and $n_2$ in applying \eqref{expansion} at each step. There are at most $2^{[5n/q_k]+1}$ terms, and at each term we either have $n_s<q_k$ or $p(s)\ge [5n/q_k]+1$. $G_{i,s}$ are matrix elements of $G$ appearing in \eqref{expansion}. Note 
that the only way to reach $(\gamma(E)-\delta/2,q_k)$-singular point in this construction is to get $n_s<q_k$ in which case the process stops and we no longer need to apply \eqref{expansion}.

Let us first consider the case $n_s<q_k$. Using the definition of regularity, 
we have
$$
|G_{1,s} G_{2,s}\ldots G_{p(s),s}|\le e^{-(\gamma(E)-\delta/2)(n-n_s)},
$$
and so
$$
|G_{1,s} G_{2,s}\ldots G_{p(s),s}\psi(n_s)|\le e^{-(\gamma(E)-\delta/2)(n-q_k)}\le 
e^{-n(\gamma(E)-\delta/2)(1-q_k^{-\theta})}\le e^{-(\gamma(E)-\delta/2-\delta_1)n},
$$
where $\delta_1$ can be made smaller than, say, $\delta/3$ 
by choosing a sufficiently large $q_k$. Note that 
this choice would not be uniform if $\gamma(E)$ could become uncontrollably large, but, since 
$\lambda$ and $\alpha$ are fixed, $\gamma(E)$ is continuous, and $E$ belongs to the 
spectrum, this is not the case.

Let us now assume that the number of factors is at least $[5n/q_k]+1$. Then we can use the facts that $|G_{i,s}|\le e^{-(\gamma(E)-\delta/2)q_k/5}$ and $|\psi(n_s)|\le 1$, 
and obtain
$$
|G_{1,s} G_{2,s}\ldots G_{p(s),s}\psi(n_s)|\le e^{-(\gamma(E)-\delta/2)\frac{5n}{q_k}\frac{q_k}{5}}\le e^{-(\gamma(E)-\delta/2)n}.
$$
Combining everything into \eqref{greenexp}, we get
$$
\psi(n)\le 2^{[5n/q_k] +1} e^{-(\gamma(E)-\delta/2-\delta_1)n}\le e^{-(\gamma(E)-\delta)n}
$$
for sufficiently large $q_0$ and $n>q_0$.\qed
\section{Acknowledgements} S.J. is a 2014--15 Simons Fellow. This research was partially
    supported by the NSF DMS--1401204. I.K. was supported by the AMS--Simons Travel Grant 2014--16. 
    We are also grateful to the Isaac
    Newton Institute for Mathematical Sciences, Cambridge, for support
    and hospitality during the programme Periodic and Ergodic Spectral
    Problems where a part of this work was done.

\end{document}